\newtheorem{theorem}{Theorem}[section]
\theoremstyle{plain}
\newtheorem{acknowledgement}{Acknowledgement}
\newtheorem{corollary}[theorem]{Corollary}
\newtheorem{definition}[theorem]{Definition}
\newtheorem{lemma}[theorem]{Lemma}
\newtheorem{proposition}[theorem]{Proposition}
\numberwithin{equation}{section}
\def \v{\mathcal V}
\begin{document}
\title[Nodal solutions to Yamabe type equations]{Multiplicity of nodal solutions to the Yamabe problem}
\author{M\'{o}nica Clapp}
\address{Instituto de Matem\'{a}ticas, Universidad Nacional Aut\'{o}noma de M\'{e}xico,
Circuito Exterior, C.U., 04510 M\'{e}xico D.F., Mexico}
\email{monica.clapp@im.unam.mx}
\author{Juan Carlos Fern\'{a}ndez}
\address{Instituto de Matem\'{a}ticas, Universidad Nacional Aut\'{o}noma de M\'{e}xico,
Circuito Exterior, C.U., 04510 M\'{e}xico D.F., Mexico}
\email{jcfmor@im.unam.mx}
\thanks{This research was partially supported by CONACYT grant 237661 (Mexico) and
UNAM-DGAPA-PAPIIT grant IN104315 (Mexico). J.C. Fern\'{a}ndez was partially
supported by a PhD fellowship from UNAM-DGAPA}
\date{\today}
\maketitle

\begin{abstract}
\smallskip Given a compact Riemannian manifold $(M,g)$ without boundary of
dimension $m\geq3$ and under some symmetry assumptions, we establish existence
of one positive and multiple nodal solutions to theYamabe-type equation
\[
-\text{div}_{g}(a\nabla u)+bu=c|u|^{2^{\ast}-2}u\quad\text{ on }M,
\]
where $a,b,c\in\mathcal{C}^{\infty}(M),$ $a$ and $c$ are positive, $-$%
div$_{g}(a\nabla)+b$ is coercive, and $2^{\ast}=\frac{2m}{m-2}$ is the
critical Sobolev exponent.

In particular, if $R_{g}$ denotes the scalar curvature of $(M,g)$, we give
conditions which guarantee that the Yamabe problem
\[
\Delta_{g}u+\frac{m-2}{4(m-1)}R_{g}u=\kappa u^{2^{\ast}-2}\quad\text{ on }M
\]
admits a prescribed number of nodal solutions.

\textsc{Key words: }Semilinear elliptic PDE on manifolds; Yamabe problem;
nodal solution; symmetric solution; blow-up analysis; nonexistence of ground states.

\textsc{2010 MSC: }35J61, 58J05, 35B06, 35B33, 35B44.

\bigskip

\end{abstract}

\baselineskip15pt

\section{Introduction and statement of results}

Given a compact Riemannian manifold $(M,g)$ without boundary of dimension
$m\geq3$, the Yamabe problem consists in finding a metric $\hat{g}$
conformally equivalent to $g$ with constant scalar curvature. If $\hat{g}$ is
conformally equivalent to $g$ we can write it as $\hat{g}=u^{4/(m-2)}g$ with
$u\in\mathcal{C}^{\infty}(M),$ $u>0.$ Then, $\hat{g}$ has constant scalar
curvature $\mathfrak{c}_{m}\kappa$\ \ iff $u$ is a positive solution to the
problem
\begin{equation}
\Delta_{g}u+\mathfrak{c}_{m}R_{g}u=\kappa\left\vert u\right\vert ^{2^{\ast}%
-2}u,\qquad u\in\mathcal{C}^{\infty}(M)\text{,} \tag{$\mathcal{Y}%
_g$}\label{yamabe}%
\end{equation}
where $\Delta_{g}=-\,$div$_{g}\nabla_{g}$ is the Laplace-Beltrami operator,
$\mathfrak{c}_{m}:=\frac{m-2}{4(m-1)}$, $R_{g}$ is the scalar curvature of
$(M,g)$, $\kappa\in\mathbb{R}$, and $2^{\ast}:=\frac{2m}{m-2}$ is the critical
Sobolev exponent. Here we shall always assume that $\kappa>0.$

This problem was completely solved by the combined efforts of Yamabe \cite{y},
Trudinger \cite{t}, Aubin \cite{a1} and Schoen \cite{sch}. A detailed
discussion may be found in \cite{a2, lp}. Obata \cite{o} showed that for an
Einstein metric the solution to the Yamabe problem is unique. On the other
hand, Pollack \cite{po} showed that, if $R_{g}>0,$ then there is a prescribed
number of positive solutions to the Yamabe problem with constant positive
scalar curvature in a conformal class which is arbitrarily close to $g$ in the
$\mathcal{C}^{0}$-topology. Compactness of the set of positive solutions was
established by Khuri, Marques and Schoen \cite{kms} if $(M,g)$ is not
conformally equivalent to the standard sphere and $\dim M\leq24.$ On the other
hand, if $M\geq25,$ Brendle \cite{b} and Brendle and Marques \cite{bm} showed
that the set of positive solutions is not compact. The equivariant Yamabe
problem was studied by Hebey and Vaugon. They showed in \cite{hv} that for any
subgroup $\Gamma$ of the group of isometries of $(M,g)$ there exists a
positive least energy $\Gamma$-invariant solution to the Yamabe problem.

If $u$ is a nodal solution to problem (\ref{yamabe}), i.e., if $u$ changes
sign, then\textit{ }$\hat{g}=\left\vert u\right\vert ^{4/(m-2)}g$ is not a
metric, as $\hat{g}$ is not smooth and it vanishes on the set of zeroes of
$u$. Ammann and Humbert called $\hat{g}$ a \textit{generalized metric}. In
\cite{ah} they showed that, if the Yamabe invariant of $(M,g)$ is nonnegative,
$(M,g)$ is not locally conformaly flat and $\dim M\geq11$, then there exists a
minimal energy nodal solution to (\ref{yamabe}). El Sayed considered the case
where the Yamabe invariant of $(M,g)$ is strictly negative in \cite{e}. Nodal
solutions to (\ref{yamabe}) on some product manifolds have been obtained,
e.g., in \cite{pet, hen}.

On the other hand, multiplicity of nodal solutions to the Yamabe problem
(\ref{yamabe}) is, largely, an open question. In a classical paper \cite{di},
W.Y. Ding established the existence of infinitely many nodal solutions to this
problem on the standard sphere $\mathbb{S}^{m}.$ He took advantage of the fact
that $\mathbb{S}^{m}$ is invariant under the action of isometry groups whose
orbits are positive dimensional.

In this paper we shall study the effect of the isometries of $M$ on the
multiplicity of nodal solutions to Yamabe-type equations. Our framework is as follows.

Let $(M,g)$ be a closed Riemannian manifold of dimension $m\geq3$ and $\Gamma$
be a closed subgroup of the group of isometries $\text{Isom}_{g}(M)$ of
$(M,g).$ As usual, \textit{closed} means compact and without boundary. We
denote by $\Gamma p:=\{\gamma p:\gamma\in\Gamma\}$ the $\Gamma$-orbit of a
point $p\in M$ and by $\#\Gamma p$ its cardinality. Recall that a subset $X$
of $M$ is said to be $\Gamma$-invariant if $\Gamma x\subset X$ for every $x\in
X,$ and a function $f:X\rightarrow\mathbb{R}$ is $\Gamma$-invariant if it is
constant on each orbit $\Gamma x$ of $X.$

We consider the Yamabe-type problem
\begin{equation}
\left\{
\begin{tabular}
[c]{l}%
$-\,\text{div}_{g}(a\nabla_{g}u)+bu=c|u|^{2^{\ast}-2}u,$\\
$u\in H_{g}^{1}(M)^{\Gamma},$%
\end{tabular}
\right.  \label{Equation : generalized Yamabe PDE}%
\end{equation}
where $a,b,c\in\mathcal{C}^{\infty}(M)$ are $\Gamma$-invariant functions, $a$
and $c$ are positive on $M$ and the operator $-\,$div$_{g}(a\nabla_{g})+b$ is
coercive on the space%
\[
H_{g}^{1}(M)^{\Gamma}:=\{u\in H_{g}^{1}(M):u\text{ is }\Gamma\text{-invariant}%
\}.
\]
If $a\equiv1$, $b=\mathfrak{c}_{m}R_{g}$ and $c\equiv\kappa$ is constant, this
is the Yamabe problem (\ref{yamabe}). In this case we shall always assume that
$\kappa>0$ and that the Yamabe operator $\Delta_{g}+\mathfrak{c}_{m}R_{g}$ is
coercive on $H_{g}^{1}(M)^{\Gamma}.$

We will prove the following result.

\begin{theorem}
\label{Theorem : Main1}If $-\,$div$_{g}(a\nabla_{g})+b$ is coercive on
$H_{g}^{1}(M)^{\Gamma}$ and $1\leq\dim(\Gamma p)<m$ for every $p\in M$, then
problem \eqref{Equation : generalized Yamabe PDE} has at least one positive
and infinitely many nodal $\Gamma$-invariant solutions.
\end{theorem}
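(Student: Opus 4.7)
Solutions to \eqref{Equation : generalized Yamabe PDE} are the critical points of the $\mathcal{C}^1$ functional
\[
J(u) := \tfrac{1}{2}\int_M\bigl(a|\nabla_g u|^2 + bu^2\bigr)\,d\mu_g \;-\; \tfrac{1}{2^{\ast}}\int_M c\,|u|^{2^{\ast}}\,d\mu_g
\]
on $H_g^1(M)^\Gamma$, on which coercivity makes the quadratic part an equivalent norm $\|\cdot\|_\Gamma$. Because $a$, $b$, $c$ are $\Gamma$-invariant, $J$ is $\Gamma$-invariant, so Palais' principle of symmetric criticality turns critical points of $J|_{H_g^1(M)^\Gamma}$ into genuine weak solutions of the full equation; elliptic regularity then makes them smooth.

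\textbf{The decisive compactness.} The crux of the argument is that the hypothesis $1\leq\dim(\Gamma p)<m$ for every $p\in M$ forces the embedding
\[
H_g^1(M)^\Gamma \hookrightarrow L^{2^{\ast}}_g(M)
\]
to be \emph{compact}. Heuristically, $\Gamma$-invariant functions are determined by their values on the quotient $M/\Gamma$, which locally behaves like a manifold of dimension at most $m-1$; the critical Sobolev exponent for that lower dimension is strictly larger than $2^{\ast}$, so the problem becomes effectively subcritical. A rigorous proof uses slices (tubular neighborhoods of orbits) combined with a covering and partition-of-unity argument. This compactness, together with the ensuing verification that $J$ satisfies the Palais--Smale condition $(\mathrm{PS})_c$ at every level $c$, is the main technical obstacle.

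\textbf{One positive solution.} Minimizing $J$ over the Nehari manifold
\[
\mathcal{N} := \{u \in H_g^1(M)^\Gamma \setminus \{0\} : J'(u)u = 0\}
\]
produces, via $(\mathrm{PS})$, a ground state $u_0$. Since $J(|u_0|) = J(u_0)$ and $|u_0| \in \mathcal{N}$, we may take $u_0 \geq 0$; the strong maximum principle then yields $u_0 > 0$.

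\textbf{Infinitely many nodal solutions.} The functional $J$ is even, satisfies $(\mathrm{PS})$, and $H_g^1(M)^\Gamma$ is infinite-dimensional. A symmetric minimax scheme (e.g.\ Bartsch's fountain theorem or the classical $\mathbb{Z}/2$-genus argument of Ambrosetti--Rabinowitz) therefore produces critical values $c_k\to\infty$ and corresponding critical points $u_k$ with $J(u_k)\to\infty$. On the other hand, a blow-up analysis in the symmetric setting (once again exploiting the effective subcriticality induced by the positive-dimensional orbits) shows that the set of positive $\Gamma$-invariant solutions is bounded in $H_g^1(M)^\Gamma$, hence has bounded energy; by the $\mathbb{Z}/2$-symmetry the same is true for negative solutions. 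Consequently all but finitely many of the $u_k$ must change sign, producing infinitely many nodal $\Gamma$-invariant solutions and completing the proof.
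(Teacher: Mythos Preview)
Your overall strategy is plausible but differs substantially from the paper's, and one step is not adequately justified.

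\textbf{Where the approaches agree.} Both use that when every $\Gamma$-orbit has positive dimension the functional $J_g$ satisfies $(PS)_\tau^\Gamma$ at every level: the paper derives this from its Compactness Theorem~\ref{Theorem : Compactness} (since $\#\Gamma p=\infty$ for all $p$), while you invoke the equivalent Hebey--Vaugon compact embedding $H_g^1(M)^\Gamma\hookrightarrow L_g^{2^*}(M)$. The existence of a positive ground state is obtained the same way in both.

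\textbf{Where they differ.} To produce nodal solutions the paper does \emph{not} first produce arbitrary critical points and then sort out their sign. Instead it applies a tailored variational principle (Theorem~\ref{thm: Variational principle}): a neighborhood of the cones $\pm\mathcal P^\Gamma$ is shown to be strictly positively invariant under the negative gradient flow, and a relative genus argument then yields, for any $k$-dimensional subspace $W\subset H_g^1(M)^\Gamma$, at least $k-1$ critical points that lie \emph{outside} these neighborhoods---hence are automatically sign-changing. The subspaces $W$ are built explicitly from $\Gamma$-invariant bump functions with disjoint supports, using the principal orbit type theorem and the hypothesis $\dim(\Gamma p)<m$.

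\textbf{The gap in your argument.} Your route requires an a~priori energy bound for all \emph{positive} $\Gamma$-invariant solutions; you assert this follows from ``a blow-up analysis in the symmetric setting'' but give no argument. This is not the same as the $(PS)$ condition: a sequence of positive solutions with $J(u_k)\to\infty$ is not a Palais--Smale sequence at any fixed level, so the compactness you have already established says nothing about it. A proof would need to show that a rescaled limit near a maximum point is a positive entire solution of $-\Delta v=v^{2^*-1}$ on $\mathbb R^m$ (hence a bubble, by Caffarelli--Gidas--Spruck) and that the $\Gamma$-invariance forces this limit to be translation-invariant along a positive-dimensional subspace, contradicting the decay of a bubble. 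This is believable but nontrivial---one must control how the group action behaves under the rescaling and rule out loss of invariance in the limit---and it is neither proved here nor available as a standard citation in this generality. The paper's relative-genus approach avoids this issue entirely, which is precisely its advantage.
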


A special case is the following multiplicity result for the Yamabe problem
(\ref{yamabe}).

\begin{corollary}
\label{Corollary : Theorem Main 1}If $\Delta_{g}+\mathfrak{c}_{m}R_{g}$ is
coercive on $H_{g}^{1}(M)^{\Gamma}$ and $1\leq\dim(\Gamma p)<m$ for all $p\in
M,$ then the Yamabe problem \emph{(\ref{yamabe})} has infinitely many $\Gamma
$-invariant nodal solutions.
\end{corollary}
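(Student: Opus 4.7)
The plan is essentially to recognize that the Yamabe problem \eqref{yamabe} is a special case of the generalized problem \eqref{Equation : generalized Yamabe PDE} and then invoke Theorem \ref{Theorem : Main1}. So the proof reduces to checking that each hypothesis of Theorem \ref{Theorem : Main1} is met in this setting. First I would set $a\equiv 1$, $b:=\mathfrak{c}_m R_g$, and $c\equiv\kappa$. Then \eqref{Equation : generalized Yamabe PDE} reads
\[
-\operatorname{div}_g(\nabla_g u)+\mathfrak{c}_m R_g\, u=\kappa|u|^{2^{\ast}-2}u,
\]
which is precisely \eqref{yamabe} restricted to $H^1_g(M)^{\Gamma}$.

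Next I would verify the regularity, positivity and invariance conditions on the coefficients. Smoothness of $a,b,c$ is immediate since $R_g\in\mathcal{C}^\infty(M)$ and the other two are constants. Positivity of $a\equiv 1$ is trivial and $c\equiv\kappa>0$ holds by the standing convention stated just before the corollary. For the $\Gamma$-invariance of $b=\mathfrak{c}_m R_g$, I would use that $\Gamma\subset\operatorname{Isom}_g(M)$, and the scalar curvature of a Riemannian manifold is preserved under isometries, so $R_g\circ\gamma=R_g$ for every $\gamma\in\Gamma$; the constants $a$ and $c$ are $\Gamma$-invariant automatically.

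The remaining two hypotheses of Theorem \ref{Theorem : Main1} — namely coercivity of $-\operatorname{div}_g(a\nabla_g)+b=\Delta_g+\mathfrak{c}_m R_g$ on $H^1_g(M)^{\Gamma}$, and the orbit-dimension condition $1\le\dim(\Gamma p)<m$ for all $p\in M$ — are exactly the standing hypotheses of the corollary. Consequently Theorem \ref{Theorem : Main1} applies and supplies one positive and infinitely many nodal $\Gamma$-invariant solutions to \eqref{yamabe}; discarding the positive one gives the conclusion.

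There is no real obstacle here: all the analytic work lies inside Theorem \ref{Theorem : Main1}, and the corollary is a direct specialization. The only non-trivial point worth spelling out explicitly is the $\Gamma$-invariance of $R_g$, which follows from $\Gamma$ acting by isometries.
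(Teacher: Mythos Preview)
Your proposal is correct and matches the paper's approach exactly: the paper presents Corollary~\ref{Corollary : Theorem Main 1} as an immediate special case of Theorem~\ref{Theorem : Main1} with $a\equiv1$, $b=\mathfrak{c}_m R_g$, $c\equiv\kappa$, and gives no separate proof. The only point you add beyond the paper is the explicit remark that $R_g$ is $\Gamma$-invariant because $\Gamma$ acts by isometries, which is indeed the one detail worth noting.
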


The standard sphere $(\mathbb{S}^{m},g_{0})$ is invariant under the action of
the group $O(k)\times O(n)$ with $k+n=m+1,$ and this action has positive
dimensional orbits if $k,n\geq2.$ So Corollary
\ref{Corollary : Theorem Main 1}\ can be seen as a generalization of Ding's
result \cite{di}. One may also consider the action of $\mathbb{S}^{1}$ on the
standard sphere $\mathbb{S}^{2k+1}\subset\mathbb{C}^{k}$ given by complex
multiplication on each complex coordinate. In this case, every orbit has
dimension one.

Further examples are obtained as follows: if $\Gamma$ is a closed subgroup of
the group of isometries of $(\mathbb{S}^{m},g_{0}),\ (N,h)$ is a closed
Riemannian manifold of dimension $n$ and $f\in\mathcal{C}^{\infty}(N)$ is a
positive function, then $\Gamma$ acts on the warped product $N\times
_{f}\mathbb{S}^{m}=(N\times\mathbb{S}^{m},h+f^{2}g_{0})$ in the obvious way.
So, if $m+n\geq3$, $\Delta_{g}+\mathfrak{c}_{m}R_{g}$ is coercive on
$H_{h+f^{2}g_{0}}^{1}(N\times_{f}\mathbb{S}^{2k+1})^{\Gamma}$ and every
$\Gamma$-orbit of $\mathbb{S}^{m}$ is positive dimensional, then the Yamabe
problem (\ref{yamabe}) has infinitely many $\Gamma$-invariant nodal solutions
on $N\times_{f}\mathbb{S}^{m}$. This extends Theorem 1.2 in \cite{pet}.

Next, we study a case in which $M$ is allowed to have finite $\Gamma$-orbits.
We consider the following setting:

Let $M$ be a closed smooth $m$-dimensional manifold and $a,b,c\in
\mathcal{C}^{\infty}(M)$ be such that $a$ and $c$ are positive on $M$. We fix
an open subset $\Omega$ of $M,$ a Riemannian metric $h$ on $\Omega$ and a
compact subgroup $\Lambda$ of Isom$_{h}(\Omega)$ such that $\dim(\Lambda p)<m$
for all $p\in\Omega$, the restrictions of $a,b,c$ to $\Omega$ are $\Lambda
$-invariant and the operator $-\,$div$_{g}(a\nabla_{g})+b$ is coercive on the
space $\mathcal{C}_{c}^{\infty}(\Omega)^{\Lambda}$ of smooth $\Lambda
$-invariant functions with compact support in $\Omega.$\ Under these
assumptions, we will prove the following multiplicity result.

\begin{theorem}
\label{Theorem : Main2}There exists an increasing sequence $(\ell_{k})$ of
positive real numbers, depending only on $(\Omega,h),$ $a,b,c$ and $\Lambda$,
with the following property: For any Riemanniann metric $g$ on $M$ and any
closed subgroup $\Gamma$ of \emph{Isom}$_{g}(M)$ which satisfy

\begin{enumerate}
\item $g=h$ in $\Omega$;

\item $\Gamma$ is a subgroup of $\Lambda$ and $a,b,c$ are $\Gamma$-invariant;

\item $-\,$\emph{div}$_{g}(a\nabla_{g})+b$ is coercive on $H_{g}%
^{1}(M)^{\Gamma}$;\smallskip

\item $\min\limits_{p\in M}\frac{a(p)^{m/2}\,\#\Gamma p}{c(p)^{\frac{m-2}{2}}%
}>\ell_{k}$;\smallskip
\end{enumerate}

\noindent problem \eqref{Equation : generalized Yamabe PDE} has at least $k$
pairs of $\Gamma$-invariant solutions $\pm u_{1},\ldots,\pm u_{k}$ such that
$u_{1}$ is positive, $u_{2},\ldots,u_{k}$ change sign, and
\begin{equation}
\int_{M}c\left\vert u_{j}\right\vert ^{2^{\ast}}dV_{g}\leq\ell_{j}%
S^{m/2}\text{\qquad for every }j=1,\ldots,k,
\label{Theorem : Main2 inequality}%
\end{equation}
where $S$ is the best Sobolev constant for the embedding $D^{1,2}%
(\mathbb{R}^{m})\hookrightarrow L^{2^{\ast}}(\mathbb{R}^{m})$.
\end{theorem}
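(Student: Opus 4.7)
The plan is to combine an equivariant min-max scheme on the Nehari manifold with a concentration-compactness analysis tailored to the orbit structure of $\Gamma$. Define the energy
\[
J_g(u) = \tfrac{1}{2}\int_M\bigl(a\,|\nabla_g u|_g^2 + b\,u^2\bigr)\,dV_g - \tfrac{1}{2^*}\int_M c\,|u|^{2^*}\,dV_g
\]
on $H^1_g(M)^\Gamma$; by the principle of symmetric criticality, its critical points are exactly the $\Gamma$-invariant solutions of \eqref{Equation : generalized Yamabe PDE}. I would restrict $J_g$ to the Nehari manifold $\mathcal{N}_g^\Gamma = \{u \in H^1_g(M)^\Gamma\setminus\{0\} : \langle J'_g(u),u\rangle = 0\}$, on which $J_g(u) = \tfrac{1}{m}\int_M c|u|^{2^*}\,dV_g$, and for every $k\ge 1$ set
\[
c_k := \inf_{A\in\mathcal{A}_k}\,\sup_{u\in A}J_g(u), \qquad \mathcal{A}_k := \{A\subset\mathcal{N}_g^\Gamma \text{ compact}: A=-A,\;\mathrm{genus}(A)\ge k\}.
\]

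The core analytic step, which I expect to be the main obstacle, is a symmetric concentration-compactness threshold: $J_g$ satisfies the Palais--Smale condition on $\mathcal{N}_g^\Gamma$ at any level strictly below $c_*(\Gamma) := \tfrac{1}{m}\min_{p\in M}\tfrac{a(p)^{m/2}\#\Gamma p}{c(p)^{(m-2)/2}}\,S^{m/2}$. The idea is that a non-compact PS sequence produces bubbles concentrating at points $p_n\to p$; by $\Gamma$-invariance an identical bubble must sit at every point of the orbit $\Gamma p_n$, and after rescaling each bubble converges to a Talenti-type profile solving $-a(p)\Delta v = c(p)|v|^{2^*-2}v$ on $\mathbb{R}^m$, whose energy equals $\tfrac{1}{m}\bigl(a(p)^{m/2}/c(p)^{(m-2)/2}\bigr)S^{m/2}$. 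Summing over the orbit produces an energy jump of at least $\tfrac{1}{m}\ell(p)S^{m/2}$ with $\ell(p):= a(p)^{m/2}\#\Gamma p/c(p)^{(m-2)/2}$; hence below $c_*(\Gamma)$ no bubbling can occur and strong convergence follows. Handling finite orbits with variable coefficients $a,b,c$ in this blow-up step is the delicate part.

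To construct the sequence $(\ell_k)$, I would work entirely inside $\Omega$: for each $k$ I build, by standard genus arguments, a compact $\mathbb{Z}/2$-symmetric subset $A_k$ of the $\Lambda$-invariant Nehari manifold inside $H^1_{0}(\Omega)^\Lambda$ of Krasnoselskii genus at least $k$, and set $\ell_k := m\,S^{-m/2}\sup_{u\in A_k}J_g(u)$. Because every $u\in A_k$ has compact support in $\Omega$ and $g=h$ there, this quantity depends only on $(\Omega,h),\,a|_\Omega,\,b|_\Omega,\,c|_\Omega$ and $\Lambda$, and the $A_k$ can be chosen so that $(\ell_k)$ is increasing. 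Extending elements of $A_k$ by zero to $M$ embeds $A_k$ into $\mathcal{A}_k$, giving $c_k\le(\ell_k/m)S^{m/2}$. Hypothesis (4) then yields $c_k<c_*(\Gamma)$, so the deformation lemma on $\mathcal{N}_g^\Gamma$ combined with the genus theorem produces $k$ pairs of critical points $\pm u_1,\ldots,\pm u_k$ satisfying \eqref{Theorem : Main2 inequality}. Finally, $c_1=\inf_{\mathcal{N}_g^\Gamma}J_g$ (every antipodal pair lies in $\mathcal{A}_1$), so $u_1$ may be replaced by $|u_1|$ and the strong maximum principle makes it positive, while each $u_j$ with $j\ge 2$ must change sign, because the subset of constant-sign functions in $\mathcal{N}_g^\Gamma$ is a disjoint union of two convex cones and thus has genus~$1$, precluding $c_j = c_1$ for $j\ge 2$.
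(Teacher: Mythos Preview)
Your concentration-compactness threshold and the idea of building $(\ell_k)$ from $\Lambda$-invariant test configurations supported in $\Omega$ match the paper's strategy (Theorem~\ref{Theorem : Compactness} and the definition of $\tau_k$ in the proof of Theorem~\ref{Theorem : Main2}). The genuine gap is your last sentence, the argument that $u_2,\ldots,u_k$ change sign. The constant-sign elements of $\mathcal N_g^\Gamma$ are not a union of convex cones (the Nehari manifold is not conical), though they do split into two components swapped by $u\mapsto -u$ and hence have genus~$1$. But the plain Krasnoselskii min-max only yields distinct critical values (or, when $c_j=c_{j+1}$, a critical set of genus $\ge 2$); it gives no sign information when $c_1<c_2<\cdots<c_k$, and nothing prevents a second \emph{positive} critical point at level $c_2>c_1$. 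The inference ``$c_j\ne c_1$, hence $u_j$ changes sign'' is simply invalid.

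The paper closes this gap with a different device: it proves (Lemma~\ref{lem:variational_principle}) that small neighbourhoods $B_\rho(\pm\mathcal P^\Gamma)$ of the nonnegative and nonpositive cones are \emph{strictly positively invariant} under the negative gradient flow of $J_g$ (taken with respect to a shifted inner product) and are disjoint from the nodal Nehari set $\mathcal E_g^\Gamma$. The min-max levels are then defined via the genus \emph{relative to} $B_\rho(\mathcal P^\Gamma)\cup B_\rho(-\mathcal P^\Gamma)\cup J_g^0$, so every critical point produced lies outside these neighbourhoods and is automatically sign-changing; a Borsuk--Ulam argument on the finite-dimensional spans $W_n$ of disjointly supported test functions then gives the required lower bound on the relative genus (Theorem~\ref{thm: Variational principle}). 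Your plan needs this flow-invariance mechanism, or an equivalent one, to force the nodal conclusion.
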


Theorem \ref{Theorem : Main2}\ asserts the existence of a prescribed number of
nodal solutions to problem \eqref{Equation : generalized Yamabe PDE} if there
is a Riemannian metric on $M,$ which extends the given Riemannian metric on
$\Omega,$ for which some group of isometries has large enough orbits.

Nodal solutions to Yamabe-type equations have been exhibited, e.g., in
\cite{dj, hol, v}. If $m\geq4,$ $a=c\equiv1$ and $\Delta_{g}+b$ is coercive,
V\'{e}tois showed that problem \eqref{Equation : generalized Yamabe PDE} has
at least $\frac{m+2}{2}$\ solutions provided that $b(p_{0})<\mathfrak{c}%
_{m}R_{g}(p_{0})$ at some point $p_{0}\in M$ \cite{v}. This last assumption
excludes the Yamabe problem (\ref{yamabe}). Also, nothing is said about the
sign of the solutions, except for the cases when the positive solution is
known to be unique.

In contrast, Theorem \ref{Theorem : Main2} does apply to the Yamabe problem.
However, property (4) requires that the group $\Lambda$ has large enough
subgroups. The group $\mathbb{S}^{1},$ for example, has this property. This
allows us to derive a multiplicity result for the Yamabe problem
(\ref{yamabe}) in the following setting.

Let $(M,h)$ be a closed Riemannian manifold on which $\mathbb{S}^{1}$ acts
freely and isometrically, such that $\Delta_{h}+\mathfrak{c}_{m}R_{h}$ is
coercive in $H_{h}^{1}(M).$ Fix an open $\mathbb{S}^{1}$-invariant subset
$\Omega$ of $M$ such that $R_{h}>0$ on $M\smallsetminus\Omega.$ Set
$\Gamma_{n}:=\{\mathrm{e}^{2\pi\mathrm{i}j/n}:j=0,...,n-1\}$. Then, the
following statement holds true.

\begin{corollary}
\label{cor:main}There exist a sequence $(\ell_{k})$ in $(0,\infty)$ and an
open neighborhood $\mathcal{O}$ of $h$ in the space of Riemannian metrics on
$M$ with the $\mathcal{C}^{0}$-topology, with the following property: for
every $g\in\mathcal{O}$ such that $g=h$ in $\Omega$ and$\ \Gamma_{n}\subset
\,$\emph{Isom}$_{g}(M)$ for some $n>\kappa^{(m-2)/2}\ell_{k},$ the Yamabe
problem \emph{(\ref{yamabe})} has at least $k$ pairs of $\Gamma_{n}$-invariant
solutions $\pm u_{1},\ldots,\pm u_{k}$ such that $u_{1}$ is positive,
$u_{2},\ldots,u_{k}$ change sign, and
\[
\int_{M}\left\vert u_{j}\right\vert ^{2^{\ast}}dV_{g}\leq\kappa^{-1}\ell
_{j}S^{m/2}\text{\qquad for every }j=1,\ldots,k.
\]

\end{corollary}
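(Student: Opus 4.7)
The plan is to derive the corollary as a specialization of Theorem~\ref{Theorem : Main2} with the choices $a\equiv 1$, $b=\mathfrak{c}_m R_g$, $c\equiv\kappa$, $\Lambda=\mathbb{S}^1$, and $\Gamma=\Gamma_n$. First I would invoke Theorem~\ref{Theorem : Main2} in its "preliminary" form on the open subset $\Omega$ with metric $h|_\Omega$ and group $\Lambda=\mathbb{S}^1$: the free isometric action of $\mathbb{S}^1$ on $(M,h)$ restricts to $\Omega$ with $\dim(\mathbb{S}^1 p)=1<m$, and makes $R_h|_\Omega$ (as well as the constants $1$ and $\kappa$) $\mathbb{S}^1$-invariant; coercivity of $\Delta_h+\mathfrak{c}_m R_h$ on $\mathcal{C}_c^\infty(\Omega)^{\mathbb{S}^1}$ is immediate from its coercivity on $H_h^1(M)$ via inclusion. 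This yields the sequence $(\ell_k)$ depending only on $(\Omega,h)$, $\kappa$ and $\mathbb{S}^1$.

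Next I would construct the neighborhood $\mathcal{O}$ so that, for every $g\in\mathcal{O}$ with $g=h$ on $\Omega$ and $\Gamma_n\subset\mathrm{Isom}_g(M)$, the four hypotheses of Theorem~\ref{Theorem : Main2} hold for the pair $(g,\Gamma_n)$. Assumption (1) is given by hypothesis. Assumption (2) holds because $\Gamma_n\subset\mathbb{S}^1=\Lambda$, and $R_g$ is $\Gamma_n$-invariant since $\Gamma_n$ acts by $g$-isometries. For assumption (4), the $\mathbb{S}^1$-action is free on $M$, so $\Gamma_n$ acts freely as well; hence $\#\Gamma_n p=n$ for all $p\in M$, and the quantity $a(p)^{m/2}\#\Gamma_n p/c(p)^{(m-2)/2}$ reduces to $n/\kappa^{(m-2)/2}$, which exceeds $\ell_k$ precisely when $n>\kappa^{(m-2)/2}\ell_k$. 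Applying Theorem~\ref{Theorem : Main2} then yields $k$ pairs of $\Gamma_n$-invariant solutions $\pm u_1,\ldots,\pm u_k$ with $u_1$ positive, $u_2,\ldots,u_k$ sign-changing, and $\int_M\kappa|u_j|^{2^*}dV_g\leq\ell_j S^{m/2}$, which after dividing by $\kappa$ gives the stated bound.

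The main obstacle is verifying assumption (3), coercivity of $\Delta_g+\mathfrak{c}_m R_g$ on $H_g^1(M)^{\Gamma_n}$, for $g$ in a $\mathcal{C}^0$-neighborhood of $h$. The Dirichlet integral $\int_M|\nabla u|_g^2\,dV_g$ depends on $g$ only through $g^{ij}$ and $\sqrt{\det g}$, hence is $\mathcal{C}^0$-continuous in $g$ and uniformly equivalent to its $h$-analogue on a small enough neighborhood. The delicate point is the scalar curvature term, since $R_g$ involves second derivatives of $g$ and is not $\mathcal{C}^0$-continuous in $g$ in general. To handle it I would split $M=\Omega\cup(M\setminus\Omega)$: on $\Omega$ we have $g=h$ so $R_g=R_h$; on the compact complement $M\setminus\Omega$ the hypothesis $R_h>0$ yields a uniform lower bound $R_h\geq c_0>0$, which together with the coercivity of $\Delta_h+\mathfrak{c}_m R_h$ on $H_h^1(M)$ provides a margin. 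Shrinking $\mathcal{O}$ as needed, the combination of $g=h$ on $\Omega$ with $\mathcal{C}^0$-control on the metric coefficients elsewhere preserves coercivity on the $\Gamma_n$-invariant subspace. Once this is in hand, the conclusion follows directly from Theorem~\ref{Theorem : Main2}.
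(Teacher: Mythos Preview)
Your overall strategy coincides with the paper's: specialize Theorem~\ref{Theorem : Main2} with $a\equiv1$, $b=\mathfrak{c}_m R_g$, $c\equiv\kappa$, $\Lambda=\mathbb{S}^1$, $\Gamma=\Gamma_n$; freeness of the $\mathbb{S}^1$-action gives $\#\Gamma_n p=n$ and hence hypothesis~(4), and the only substantive check is~(3), coercivity of $\Delta_g+\mathfrak{c}_m R_g$ for $g$ in a $\mathcal{C}^0$-neighborhood of $h$.

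On that point the paper is more concrete than your sketch. It sets $\mathcal{O}$ equal to the intersection of a $\mathcal{C}^0$-ball around $h$ with
\[
\mathcal{O}_1=\{g:\tfrac12 R_h<R_g<2R_h\ \text{on}\ M\setminus\Omega\},\qquad
\mathcal{O}_2=\{g:\tfrac12\sqrt{|h|}<\sqrt{|g|}<2\sqrt{|h|}\ \text{on}\ M\setminus\Omega\},
\]
and then compares the quadratic forms $\int_M(|\nabla_g u|_g^2+\mathfrak{c}_m R_g u^2)\,dV_g$ and $\int_M(|\nabla_h u|_h^2+\mathfrak{c}_m R_h u^2)\,dV_h$ directly on $M\setminus\Omega$ (on $\Omega$ they agree since $g=h$), transferring coercivity from $h$ to $g$. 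The paper declares $\mathcal{O}_1$ to be $\mathcal{C}^0$-open by asserting that $g\mapsto R_g$ is continuous for the $\mathcal{C}^0$-topology---precisely the step you flag as delicate. Your proposed workaround, however, using only $R_h\geq c_0>0$ on $M\setminus\Omega$ together with $\mathcal{C}^0$-control of the metric coefficients, does not by itself bound $R_g$ either, so it is not more complete on this issue; in the end both arguments require control of $R_g$ on the complement, and the paper secures it by building that control into the very definition of $\mathcal{O}$.
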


For instance, we may take $\Omega$ to be the complement of a closed tubular
neighborhood of an $\mathbb{S}^{1}$-orbit in $(M,h)$ on which $R_{h}>0.$ Then
$M\smallsetminus\Omega$ is $\mathbb{S}^{1}$-diffeomorphic to $\mathbb{S}%
^{1}\times\mathbb{B}^{m-1},$ where $\mathbb{B}^{m-1}$ is the closed unit ball
in $\mathbb{R}^{m-1}$. We choose $n>\kappa^{(m-2)/2}\ell_{k}$. Then, if we
modify the metric in the interior of the piece of $M\smallsetminus\Omega$
which corresponds to $\{\mathrm{e}^{2\pi\mathrm{i}\vartheta/n}:0\leq
\vartheta\leq1\}\times\mathbb{B}^{m-1}$ and translate this modification to
each of the pieces corresponding to $\{\mathrm{e}^{2\pi\mathrm{i}\vartheta
/n}:j-1\leq\vartheta\leq j\}\times\mathbb{B}^{m-1},$ $j=2,...,n,$ we obtain a
metric $g$ on $M$ such that $g=h$ in $\Omega$ and$\ \Gamma_{n}\subset
\,$Isom$_{g}(M).$ If $g$ is chosen to be close enough to $h$, then the
previous corollary asserts the existence of $k$ pairs of solutions to the
Yamabe problem (\ref{yamabe}). This way we obtain many examples of Riemannian
manifolds with finite symmetries which admit a prescribed number of nodal
solutions to the Yamabe problem.

We would like to mention that existence and multiplicity of positive and nodal
solutions are also available for some perturbations of the Yamabe problem;
see, e.g., \cite{mpv, rv} and the references therein.

Finally, we wish to stress that, even though the Yamabe invariant is always
attained, problem \eqref{Equation : generalized Yamabe PDE} need not have a
ground state solution, as the following example shows. So a solution cannot
always be obtained by minimization.

\begin{proposition}
\label{prop:nonexistence}If $(\mathbb{S}^{m},g_{0})$ is the standard sphere
and $b\in\mathcal{C}^{\infty}(\mathbb{S}^{m})$ is such that $b\geq
\mathfrak{c}_{m}R_{g_{0}}=\frac{m(m-2)}{4}$ and $b\not \equiv \mathfrak{c}%
_{m}R_{g_{0}}$, then the equation
\[
\Delta_{g_{0}}u+bu=|u|^{2^{\ast}-2}u,\qquad u\in\mathcal{C}^{\infty
}(\mathbb{S}^{m})\text{,}%
\]
does not admit a ground state solution, i.e.,%
\[
\inf_{\substack{u\in\mathcal{C}^{\infty}(\mathbb{S}^{m})\\u\neq0}}\frac
{\int_{\mathbb{S}^{m}}\left[  |\nabla_{g_{0}}u|_{g_{0}}^{2}+bu^{2}\right]
dV_{g_{0}}}{\left(  \int_{\mathbb{S}^{m}}|u|^{2^{\ast}}dV_{g_{0}}\right)
^{2/2^{\ast}}}%
\]
is not attained.
\end{proposition}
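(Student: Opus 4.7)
The plan is to establish two facts: first, that the infimum equals the best Sobolev constant $S$; second, that if the infimum were attained, then one would force the nonnegative function $b-\mathfrak{c}_m R_{g_0}$ to have zero integral against a strictly positive weight, contradicting the hypothesis $b\not\equiv\mathfrak{c}_m R_{g_0}$. Write $Q_b(u)$ for the quotient in the statement and $Q_0(u)$ for the Yamabe quotient obtained by replacing $b$ with $\mathfrak{c}_m R_{g_0}$.

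For the value of the infimum, the lower bound $\inf Q_b\ge S$ is immediate: since $b\ge\mathfrak{c}_m R_{g_0}$ pointwise, $Q_b(u)\ge Q_0(u)$ for every $u$, and Aubin's classical computation gives $\inf Q_0=S$ on the standard sphere (the Yamabe and Sobolev extremals coincide via stereographic projection). For the matching upper bound, I would use the standard Aubin bubbles $U_\varepsilon$ concentrating at a fixed point of $\mathbb{S}^m$, normalized so that $\|U_\varepsilon\|_{L^{2^*}}=1$. Transplanting from $\mathbb{R}^m$ via stereographic projection (with a smooth cut-off to handle dimensions $m=3,4$ where the Euclidean bubble is not in $L^2$), one gets $\int_{\mathbb{S}^m}|\nabla_{g_0} U_\varepsilon|^2\,dV_{g_0}=S+o(1)$, while $\|U_\varepsilon\|_{L^2}^2\to 0$ at the well-known dimension-dependent rate (order $\varepsilon$, $\varepsilon^2|\log\varepsilon|$, $\varepsilon^2$ for $m=3$, $4$, $\ge 5$ respectively). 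Hence $\int_{\mathbb{S}^m} b\,U_\varepsilon^2\,dV_{g_0}\le\|b\|_\infty\|U_\varepsilon\|_{L^2}^2\to 0$, so $Q_b(U_\varepsilon)\to S$ and $\inf Q_b\le S$.

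Next, suppose the infimum $S$ is attained by some $u\ne 0$. Since $Q_b(|u|)=Q_b(u)$, one may assume $u\ge 0$; then $u$ satisfies the Euler-Lagrange equation $\Delta_{g_0}u+bu=\lambda u^{2^*-1}$ for a positive Lagrange multiplier $\lambda$, hence is smooth by Trudinger's regularity and strictly positive by the strong maximum principle. From
\[
S=Q_b(u)=Q_0(u)+\frac{\int_{\mathbb{S}^m}(b-\mathfrak{c}_m R_{g_0})\,u^2\,dV_{g_0}}{\bigl(\int_{\mathbb{S}^m}|u|^{2^*}\,dV_{g_0}\bigr)^{2/2^*}}\ge S
\]
one deduces $\int_{\mathbb{S}^m}(b-\mathfrak{c}_m R_{g_0})u^2\,dV_{g_0}=0$. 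But $b-\mathfrak{c}_m R_{g_0}\ge 0$ is continuous and, by hypothesis, strictly positive on a nonempty open set, while $u>0$ everywhere; the integral must therefore be strictly positive, a contradiction.

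\textbf{Main obstacle.} The only step requiring genuine care is the asymptotic expansion $Q_b(U_\varepsilon)\to S$; in particular, the dimension-dependent control of $\|U_\varepsilon\|_{L^2}^2$ on the sphere and the use of a cut-off of the Euclidean bubble in the low-dimensional cases. All remaining ingredients---elliptic regularity, strong maximum principle, and the pointwise comparison $Q_b\ge Q_0$---are standard, so the structural core of the argument is the strict gain recorded in the displayed identity above.
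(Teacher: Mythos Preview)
Your argument is correct and follows the same two-step scheme as the paper: first show the infimum equals $S$, then obtain a contradiction from the strict positivity of the extra term $\int (b-\mathfrak{c}_m R_{g_0})u^2$ against a minimizer. The paper, however, carries everything to $\mathbb{R}^m$ via stereographic projection, writing the quotient as $\frac{\int|\nabla v|^2+\widetilde{b}\,v^2}{(\int|v|^{2^*})^{2/2^*}}$ with $\widetilde{b}=\varphi^{2^*-2}(b\circ\sigma^{-1}-\mathfrak{c}_m R_{g_0})$; the conformal weight $\varphi^{2^*-2}=\bigl(2/(1+|x|^2)\bigr)^2$ gives $\widetilde{b}$ quadratic decay at infinity, so $\int_{\mathbb{R}^m}\widetilde{b}\,U_\varepsilon^2\to 0$ in \emph{every} dimension $m\ge 3$ with no cut-off and no case distinction. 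This is exactly the ``main obstacle'' you flag: your dimension-dependent cut-off analysis on the sphere is unnecessary once you exploit the conformal equivalence (equivalently, the exact conformal bubbles on $\mathbb{S}^m$ are smooth global functions with $Q_0\equiv S$ and $L^2$-norm tending to zero). For the contradiction step your version is actually more explicit than the paper's one-line chain $S=S_b>\ldots\ge S$, since you justify via regularity and the strong maximum principle why a minimizer cannot vanish on the open set where $b>\mathfrak{c}_m R_{g_0}$.
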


Theorems \ref{Theorem : Main1} and \ref{Theorem : Main2} and Corollary
\ref{cor:main} will be proved in Section \ref{sec:main}. Their proof follows
some ideas introduced in \cite{cf1}, where a result similar to Theorem
\ref{Theorem : Main2}, in a bounded domain of $\mathbb{R}^{m},$ is
established. The proof is based on a compactness result and a variational
principle for nodal solutions which are proved in Sections
\ref{Section : Compactness} and \ref{sec:vp} respectively. Proposition
\ref{prop:nonexistence} is proved in Section \ref{sec:nonexistence}.

\section{Proof of the main results}

\label{sec:main}Let $(M,g)$ be a closed Riemannian manifold of dimension
$m\geq3$, $\Gamma$ be a closed subgroup of $\text{Isom}_{g}(M),$ and
$a,b,c\in\mathcal{C}^{\infty}(M)$ be $\Gamma$-invariant functions. We will
assume throughout this section that $a>0,$ $c>0$ and that the operator
$-\,$div$_{g}(a\nabla_{g})+b$ is coercive on the space $H_{g}^{1}(M)^{\Gamma
}:=\{u\in H_{g}^{1}(M):u$ is $\Gamma$-invariant$\}.$ Then,%
\[
\langle u,v\rangle_{g,a,b}:=\int_{M}\left[  a\langle\nabla_{g}u,\nabla
_{g}v\rangle_{g}+buv\right]  dV_{g}%
\]
is an interior product in $H_{g}^{1}(M)^{\Gamma}$ and the induced norm, which
we will denote by $\Vert\cdot\Vert_{g,a,b}$, is equivalent to the standard
norm $\Vert\cdot\Vert_{g}$ in $H_{g}^{1}(M)^{\Gamma}$. Also,
\[
|u|_{g,c,2^{\ast}}:=\left(  \int_{M}c|u|^{2^{\ast}}dV_{g}\right)  ^{1/2^{\ast
}}%
\]
defines a norm in $L_{g}^{2^{\ast}}(M)$ which is equivalent to the standard
norm $|\cdot|_{g,2^{\ast}}$.

By the principle of symmetric criticality \cite{p}, the solutions to problem
\eqref{Equation : generalized Yamabe PDE} are the critical points of the
energy functional
\begin{align*}
J_{g}(u)  &  =\frac{1}{2}\int_{M}\left[  a|\nabla_{g}u|_{g}^{2}+bu^{2}\right]
dV_{g}-\frac{1}{2^{\ast}}\int_{M}c|u|^{2^{\ast}}dV_{g}\\
&  =\frac{1}{2}\Vert u\Vert_{g,a,b}^{2}-\frac{1}{2^{\ast}}|u|_{g,c,2^{\ast}%
}^{2^{\ast}}%
\end{align*}
defined on the space $H_{g}^{1}(M)^{\Gamma}.$ The nontrivial ones lie on the
Nehari manifold%
\begin{equation}
\mathcal{N}_{g}^{\Gamma}:=\{u\in H_{g}^{1}(M)^{\Gamma}:u\neq0,\text{ \ }\Vert
u\Vert_{g,a,b}^{2}=|u|_{g,c,2^{\ast}}^{2^{\ast}}\} \label{eq:nehari}%
\end{equation}
which is of class $\mathcal{C}^{2},$ radially diffeomorphic to the unit sphere
in $H_{g}^{1}(M)^{\Gamma},$ and a natural constraint for $J_{g}.$ Moreover,
for every $u\in H_{g}^{1}(M)^{\Gamma},$ $u\neq0,$
\begin{equation}
u\in\mathcal{N}_{g}^{\Gamma}\iff J_{g}(u)=\max_{t\geq0}J_{g}(tu).
\label{Equation : Maximum lies in Nehari}%
\end{equation}
Set
\[
\tau_{g}^{\Gamma}:=\inf_{\mathcal{N}_{g}^{\Gamma}}J_{g}.
\]
The continuity of the Sobolev embedding $H_{g}^{1}(M)\hookrightarrow
L_{g}^{2^{\ast}}(M)$ implies that $\tau_{g}^{\Gamma}>0.$\

The proofs of Theorems \ref{Theorem : Main1} and \ref{Theorem : Main2} follow
the scheme introduced in \cite{cf1,cf2}. They are based on a compactness
result and a variational principle for nodal solutions, which are stated next.

\begin{definition}
A $\Gamma$-invariant Palais-Smale sequence for the functional $J_{g}$ at the
level $\tau$ is a sequence $(u_{k})$ such that,
\[
u_{k}\in H_{g}^{1}(M)^{\Gamma},\qquad J_{g}(u_{n})\rightarrow\tau,\qquad
J_{g}^{\prime}(u_{k})\rightarrow0\text{ in }\left(  H_{g}^{1}(M)\right)
^{\prime}.
\]
We shall say that $J_{g}$ satisfies condition $(PS)_{\tau}^{\Gamma}$ in
$H_{g}^{1}(M)$ if every $\Gamma$-invariant Palais-Smale sequence for $J_{g}$
at the level $\tau$ contains a subsequence which converges strongly in
$H_{g}^{1}(M).$
\end{definition}

The presence of symmetries allows to increase the lowest level at which this
condition fails. The following result will be proved in Section
\ref{Section : Compactness}.

\begin{theorem}
[Compactness]\label{Theorem : Compactness}The functional $J_{g}$ satisfies
condition $(PS)_{\tau}^{\Gamma}$ in $H_{g}^{1}(M)$ for every
\[
\tau<\left(  \min_{q\in M}\frac{a(q)^{m/2}\,\#\Gamma q}{c(q)^{(m-2)/2}%
}\right)  \frac{1}{m}S^{m/2},
\]
where $S$ is the best Sobolev constant for the embedding $D^{1,2}%
(\mathbb{R}^{m})\hookrightarrow L^{2^{\ast}}(\mathbb{R}^{m})$.
\end{theorem}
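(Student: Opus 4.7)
The plan is to follow the standard global compactness scheme of Struwe, adapted to the manifold setting with non-constant coefficients and, crucially, to the $\Gamma$-equivariant framework. The central observation is that $\Gamma$-invariance forces every concentration ``bubble'' to appear simultaneously at each point of a $\Gamma$-orbit, so its energy cost is multiplied by $\#\Gamma q$; this raises the failure threshold of the Palais--Smale condition to exactly the bound claimed.

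First I would establish boundedness. Given a $\Gamma$-invariant Palais--Smale sequence $(u_{k})$ at level $\tau$, the identity
\[
J_{g}(u_{k})-\tfrac{1}{2}J_{g}^{\prime}(u_{k})u_{k}=\tfrac{1}{m}\,|u_{k}|_{g,c,2^{\ast}}^{2^{\ast}}
\]
together with coercivity of $\|\cdot\|_{g,a,b}$ and $J_{g}^{\prime}(u_{k})\to 0$ in the dual norm shows that $(u_{k})$ is bounded in $H_{g}^{1}(M)^{\Gamma}$. Passing to a subsequence, $u_{k}\rightharpoonup u$ weakly in $H_{g}^{1}(M)^{\Gamma}$, a.e.\ on $M$, and strongly in $L_{g}^{p}(M)$ for every $p<2^{\ast}$. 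A routine weak-continuity argument shows that $u$ is a critical point of $J_{g}$, so that $J_{g}(u)\geq 0$. The Brezis--Lieb lemma then gives
\[
\|w_{k}\|_{g,a,b}^{2}=\|u_{k}\|_{g,a,b}^{2}-\|u\|_{g,a,b}^{2}+o(1),\qquad |w_{k}|_{g,c,2^{\ast}}^{2^{\ast}}=|u_{k}|_{g,c,2^{\ast}}^{2^{\ast}}-|u|_{g,c,2^{\ast}}^{2^{\ast}}+o(1),
\]
where $w_{k}:=u_{k}-u$, and $(w_{k})$ is itself a $\Gamma$-invariant Palais--Smale sequence at level $\tau-J_{g}(u)$ with $w_{k}\rightharpoonup 0$.

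Next I would run a blow-up analysis on $(w_{k})$. If $w_{k}\not\to 0$ strongly, one selects concentration points $q_{k}\to q\in M$ and scales $\varepsilon_{k}\downarrow 0$ capturing the maximum concentration of $|w_{k}|^{2^{\ast}}$, passes to normal coordinates at $q$, and shows that the rescaled functions converge to a nontrivial weak solution $W$ of the frozen limit equation
\[
-a(q)\Delta W=c(q)|W|^{2^{\ast}-2}W\qquad\text{on }\mathbb{R}^{m}.
\]
A direct rescaling of the Aubin--Talenti extremal shows that any such $W$ has energy at least $E(q):=\tfrac{1}{m}\,a(q)^{m/2}c(q)^{-(m-2)/2}S^{m/2}$. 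Subtracting this bubble from $w_{k}$ yields a new Palais--Smale sequence at a strictly lower level, and iterating gives a Struwe-type decomposition of $(u_{k})$ into $u$ plus finitely many bubbles. Now the symmetry enters: since $w_{k}$ is $\Gamma$-invariant, a bubble concentrating at $q$ gives rise, by applying each $\gamma\in\Gamma$, to bubbles concentrating at every point of $\Gamma q$; for $k$ large these have pairwise disjoint supports at the rescaled scale and hence their energies add. Concentration at a point with infinite orbit is therefore impossible (it would cost infinite energy), while concentration at a point $q$ with finite orbit costs at least $\#\Gamma q\cdot E(q)$. Summing over bubbling orbits and using $J_{g}(u)\geq 0$,
\[
\tau \geq J_{g}(u)+\sum_{\text{orbits}}\#\Gamma q\cdot E(q)\geq \min_{q\in M}\#\Gamma q\cdot E(q),
\]
which contradicts the hypothesis on $\tau$ unless no bubble appears. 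In that case $w_{k}\to 0$ in $L_{g}^{2^{\ast}}(M)$, and coercivity then upgrades this to strong convergence in $H_{g}^{1}(M)$.

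The main obstacle I expect is the equivariant bubble identification: one must verify that the bubbles sitting at the distinct points of a single $\Gamma$-orbit are asymptotically orthogonal in $H_{g}^{1}(M)$ (so their energies genuinely add) and that, conversely, $\Gamma$-invariance of $w_{k}$ really does replicate a bubble throughout its orbit, rather than allowing the profile to survive at only one representative. For finite orbits this follows from the positive separation of distinct orbit points against the vanishing concentration scale $\varepsilon_{k}$, but the bookkeeping has to be carried through the Struwe iteration. The remaining ingredients---passing to normal coordinates so that $-\,\mathrm{div}_{g}(a\nabla_{g})$ limits to $-a(q)\Delta$, absorbing the lower-order term $bu$ as an error in the rescaling, and the Brezis--Lieb splitting---are by now standard and follow the template of \cite{cf1,cf2}.
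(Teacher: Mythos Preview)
Your proposal is correct and follows the same overall scheme as the paper: bound the sequence, subtract the weak limit, and run a blow-up analysis on the remainder, using $\Gamma$-invariance to multiply the bubble energy by the orbit cardinality.

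The paper's execution is more economical than your outline in two respects. First, it does not carry out a full iterated Struwe decomposition: since the goal is only to contradict the level bound on $\tau$, it suffices to extract a \emph{single} nontrivial rescaled profile and bound its energy from below (Proposition~\ref{Prop : Compactness}), with no need to subtract it and iterate. Second, and more relevant to the obstacle you flagged, the $\Gamma$-multiplication is obtained without any ``asymptotic orthogonality of orbit-translated bubbles''. Having located a concentration point $p$, the paper simply picks $n$ distinct points $\gamma_{1}p,\dots,\gamma_{n}p$ of $\Gamma p$, chooses $\rho>0$ so small that the balls $B_{g}(\gamma_{i}p_{k},\rho)$ are pairwise disjoint, and uses the $\Gamma$-invariance of $c|u_{k}|^{2^{\ast}}$ to write
\[
n\int_{B_{g}(p_{k},\rho)}c|u_{k}|^{2^{\ast}}\,dV_{g}=\sum_{i=1}^{n}\int_{B_{g}(\gamma_{i}p_{k},\rho)}c|u_{k}|^{2^{\ast}}\,dV_{g}\leq\int_{M}c|u_{k}|^{2^{\ast}}\,dV_{g}\to m\tau.
\]
Fatou's lemma then gives $\tau\geq \tfrac{n}{m}\int_{\mathbb{R}^{m}}c(p)|v|^{2^{\ast}}dx$ for every $n\leq\#\Gamma p$, which simultaneously forces $\#\Gamma p<\infty$ and yields the required lower bound. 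This direct integral comparison sidesteps entirely the bookkeeping of identifying and separating orbit-translated bubbles through the Struwe iteration; your route would also succeed, but is heavier than necessary here.
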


If all $\Gamma$-orbits in $M$ have positive dimension, this result says that
$J_{g}$ satisfies $(PS)_{\tau}^{\Gamma}$ for every $\tau\in\mathbb{R}.$ This
can also be deduced from the compactness of the Sobolev embedding $H_{g}%
^{1}(M)^{\Gamma}\hookrightarrow L_{g}^{2^{\ast}}(M)$ which was proved by Hebey
and Vaugon in \cite{hv1}. However, this embedding is not longer compact when
$M$ contains a finite orbit, as in the situation considered in Theorem
\ref{Theorem : Main2}.

The variational principle that we will use is the following one. It will be
proved in Section \ref{sec:vp}.

\begin{theorem}
[Sign-changing critical points]\label{thm: Variational principle}Let $W$ be a
nontrivial finite dimensional subspace of $H_{g}^{1}(M)^{\Gamma}$. If $J_{g}$
satisfies $(PS)_{\tau}^{\Gamma}$ in $H_{g}^{1}(M)$ for every $\tau\leq\sup
_{W}J_{g}$, then $J_{g}$ has at least one positive critical point $u_{1}$ and
$\dim W-1$ pairs of nodal critical points $\pm u_{2},...,\pm u_{k}$ in
$H_{g}^{1}(M)^{\Gamma}$ such that $J_{g}(u_{1})=\tau_{g}^{\Gamma}$ and
$J_{g}(u_{i})\leq\sup_{W}J_{g}$ for $i=1,...,k.$
\end{theorem}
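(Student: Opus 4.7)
The plan is to work on the Nehari manifold $\mathcal{N}_{g}^{\Gamma}$ from (\ref{eq:nehari}), a $C^{2}$-hypersurface radially diffeomorphic to the unit sphere in $H_{g}^{1}(M)^{\Gamma}$, on which $J_{g}$ equals $\frac{1}{m}\|\cdot\|_{g,a,b}^{2}$ and is bounded below by $\tau_{g}^{\Gamma}>0$. For any $u\in W\setminus\{0\}$, writing $t_{u}>0$ for the unique scalar with $t_{u}u\in\mathcal{N}_{g}^{\Gamma}$, the characterization (\ref{Equation : Maximum lies in Nehari}) gives $J_{g}(t_{u}u)=\max_{t\geq 0}J_{g}(tu)\leq\sup_{W}J_{g}$, so in particular $\tau_{g}^{\Gamma}\leq\sup_{W}J_{g}$. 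Applying $(PS)_{\tau_{g}^{\Gamma}}^{\Gamma}$ to a minimizing sequence on $\mathcal{N}_{g}^{\Gamma}$ produces a minimizer, and since $J_{g}(|u|)=J_{g}(u)$ and $|u|\in\mathcal{N}_{g}^{\Gamma}$ whenever $u\in\mathcal{N}_{g}^{\Gamma}$, it may be chosen nonnegative; elliptic regularity and the strong maximum principle then yield the positive critical point $u_{1}$ with $J_{g}(u_{1})=\tau_{g}^{\Gamma}$.

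For the remaining critical points I would run an equivariant Lusternik--Schnirelmann scheme with respect to the antipodal action $u\mapsto -u$. Let $k:=\dim W$ and let $\Sigma_{j}$ denote the family of closed symmetric subsets $A\subset\mathcal{N}_{g}^{\Gamma}$ of Krasnoselski genus $\gamma(A)\geq j$. Set
\[
c_{j}:=\inf_{A\in\Sigma_{j}}\sup_{u\in A}J_{g}(u),\qquad j=1,\ldots,k.
\]
The radial projection $\pi(u):=t_{u}u$ is continuous and odd, and its restriction to the unit sphere of $W$ is injective; it therefore maps that $(k-1)$-sphere homeomorphically onto an element of $\Sigma_{k}$ on which, by the Nehari characterization, $J_{g}\leq\sup_{W}J_{g}$. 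Hence $\tau_{g}^{\Gamma}=c_{1}\leq c_{2}\leq\cdots\leq c_{k}\leq\sup_{W}J_{g}$. Since $(PS)_{\tau}^{\Gamma}$ holds for all $\tau\leq\sup_{W}J_{g}$, the standard $\mathbb{Z}/2$-equivariant deformation lemma on the natural constraint $\mathcal{N}_{g}^{\Gamma}$ forces each $c_{j}$ to be a critical value of $J_{g}$, and any coincidence $c_{j}=c_{j+1}$ to produce a critical set of genus $\geq 2$ and hence infinitely many pairs.

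To guarantee that the critical points associated to $c_{2},\ldots,c_{k}$ change sign, I would compare against the positive and negative cones $\mathcal{N}^{\pm}:=\{u\in\mathcal{N}_{g}^{\Gamma}:\pm u\geq 0\}$. These are closed, disjoint, and interchanged by the antipodal map, while $u\mapsto\int_{M}u\,dV_{g}$ is a continuous odd map $\mathcal{N}^{+}\cup\mathcal{N}^{-}\to\mathbb{R}\setminus\{0\}$; hence $\gamma(\mathcal{N}^{+}\cup\mathcal{N}^{-})\leq 1$. If for some $j\in\{2,\ldots,k\}$ the critical set at level $c_{j}$ were contained in $\mathcal{N}^{+}\cup\mathcal{N}^{-}$, an odd $\Gamma$-equivariant deformation localized near that critical set would carry any almost-minimizing $A\in\Sigma_{j}$ into $\mathcal{N}^{+}\cup\mathcal{N}^{-}$, contradicting $\gamma(A)\geq j\geq 2>\gamma(\mathcal{N}^{+}\cup\mathcal{N}^{-})$. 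This forces $c_{j}$ to be attained at a sign-changing critical point, producing the desired $k-1$ nodal pairs $\pm u_{2},\ldots,\pm u_{k}$ with $J_{g}(u_{i})\leq\sup_{W}J_{g}$.

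The main technical obstacle is the construction of the equivariant deformation used in the last step: one must exhibit a pseudo-gradient flow on $\mathcal{N}_{g}^{\Gamma}$ that is simultaneously $\Gamma$-equivariant and odd, respects the Nehari constraint, and, whenever the critical set at some level $c_{j}$ is purely signed, can be localized so as to push a neighborhood of that set into $\mathcal{N}^{+}\cup\mathcal{N}^{-}$ while strictly decreasing $J_{g}$ elsewhere. Once this equivariant deformation is in place, the remainder of the argument reduces to standard Lusternik--Schnirelmann bookkeeping on a $C^{2}$ Hilbert manifold together with the strong maximum principle for the ground state.
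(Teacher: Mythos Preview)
Your proposal and the paper's proof diverge precisely at the point you flag as the ``main technical obstacle,'' and this is not a side issue but the entire substance of the argument. The claim in your third paragraph---that if the critical set at level $c_{j}$ lies in $\mathcal{N}^{+}\cup\mathcal{N}^{-}$ then an equivariant deformation carries an almost-minimizing $A\in\Sigma_{j}$ into $\mathcal{N}^{+}\cup\mathcal{N}^{-}$---is not what any standard deformation lemma gives. What one obtains is $\eta(A\smallsetminus N)\subset J_{g}^{c_{j}-\varepsilon}$ for a neighborhood $N$ of $K_{c_{j}}$, and since $\mathcal{N}^{+}\cup\mathcal{N}^{-}$ is \emph{closed} (not open) in $\mathcal{N}_{g}^{\Gamma}$, $N$ cannot be taken inside it; subadditivity of the genus then yields only $\gamma(A)\leq\gamma(N)+\gamma(J_{g}^{c_{j}-\varepsilon}\cap\mathcal{N}_{g}^{\Gamma})\leq 1+(j-1)=j$, which is no contradiction. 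What is really needed is that the negative gradient flow leaves an \emph{open} neighborhood of the signed functions positively invariant, so that one can run the minimax in its complement. There is no way to extract this from the Nehari-constrained flow and the bare observation that $\gamma(\mathcal{N}^{+}\cup\mathcal{N}^{-})\leq 1$.

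The paper supplies exactly this missing ingredient, and it does so in the full space $H_{g}^{1}(M)^{\Gamma}$ rather than on the Nehari manifold. One chooses the scalar product $\langle\cdot,\cdot\rangle_{g,a,A}$ with $A>\max\{1,\mu,|b|_{\mathcal{C}^{0}}\}$ so that $\nabla J_{g}(u)=u-Lu-Gu$, where $L$ and $G$ solve (\ref{eq:Lu})--(\ref{eq:Gu}); the maximum principle gives $L(\mathcal{P}^{\Gamma}),G(\mathcal{P}^{\Gamma})\subset\mathcal{P}^{\Gamma}$, and the contraction estimate $\|Lu\|_{g,a,A}\leq\overline{\mu}\|u\|_{g,a,A}$ with $\overline{\mu}<1$ (Lemma~\ref{lem:vetois}) then makes the closed balls $B_{\rho}(\pm\mathcal{P}^{\Gamma})$ strictly positively invariant under the flow for small $\rho$ (Lemma~\ref{lem:variational_principle}). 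With this in hand, the paper replaces the ordinary genus by the \emph{relative} genus $\mathfrak{g}(\cdot,\mathcal{D}_{0}^{\Gamma})$, where $\mathcal{D}_{0}^{\Gamma}=B_{\rho}(\mathcal{P}^{\Gamma})\cup B_{\rho}(-\mathcal{P}^{\Gamma})\cup J_{g}^{0}$ is flow-invariant, so that the minimax values $c_{j}$ are automatically realized by critical points outside $\mathcal{D}_{0}^{\Gamma}$, hence sign-changing (Lemma~\ref{Lemma:Var pri crit points}); a Borsuk--Ulam argument on $\partial\mathcal{O}\subset W$ then yields the lower bound $\mathfrak{g}(\mathcal{D}_{d}^{\Gamma},\mathcal{D}_{0}^{\Gamma})\geq\dim W-1$. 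Your treatment of the positive ground state $u_{1}$ via minimization on $\mathcal{N}_{g}^{\Gamma}$ and the strong maximum principle is correct and consistent with the paper.
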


For the proof of Theorems \ref{Theorem : Main1} and \ref{Theorem : Main2} we
also need the following well known result. Recall that the $\Gamma$-orbit
space of a $\Gamma$-invariant subset $X$ of $M$ is the set $X/\Gamma$ of all
$\Gamma$-orbits in $X,$ with the quotient topology. The $\Gamma$-isotropy
subgroup of a point $p\in M$ is defined as $\Gamma_{p}:=\{\gamma\in
\Gamma:\gamma p=p\}$. The $\Gamma$-orbit $\Gamma p$ of $p$ is $\Gamma
$-diffeomorphic to the homogeneous space $\Gamma/\Gamma_{p}$. Isotropy
subgroups satisfy $\Gamma_{\gamma p}=\gamma\Gamma_{p}\gamma^{-1}$. Thus, every
subgroup of $\Gamma$ which is conjugate to an isotropy subgroup is also an
isotropy subgroup; see, e.g., \cite{bre,td}. We denote by $(H)$ the conjugacy
class of a subgroup $H$ of $\Gamma.$

\begin{theorem}
\label{thm:principal_orbit}Let $M$ be a smooth connected manifold with a
smooth action of a compact Lie group $\Gamma$. Then there exists a closed
subgroup $H$ of $\Gamma$ such that the set $M_{(H)}:=\{p\in M:(\Gamma
_{p})=(H)\}$ is open and dense in $M.$ Its orbit space $M_{(H)}/\Gamma$ is a
smooth manifold of dimension $m-\dim(\Gamma/H),$ and the quotient map
$M_{(H)}\rightarrow M_{(H)}/\Gamma$ is a fiber bundle with fiber $\Gamma/H$.
\end{theorem}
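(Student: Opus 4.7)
The plan is to organize the proof around the slice theorem of Mostow--Palais--Koszul together with the natural partial order on conjugacy classes of isotropy subgroups. I would start by recalling the slice theorem: for every $p\in M$ there is a $\Gamma$-invariant open neighborhood of the orbit $\Gamma p$ that is equivariantly diffeomorphic to the associated bundle $\Gamma\times_{\Gamma_{p}}V$, where $V$ is a linear orthogonal representation of $\Gamma_{p}$ (the normal slice at $p$). An immediate consequence is that every $q$ in such a tube has $\Gamma_{q}$ subconjugate to $\Gamma_{p}$; moreover, within the slice, the isotropy type is locally constant exactly on the $\Gamma_{p}$-fixed subspace $V^{\Gamma_{p}}$ and strictly smaller elsewhere.

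Next I would order conjugacy classes of closed subgroups of $\Gamma$ by subconjugacy and isolate a minimal class occurring on $M$. Using the fact that a compact Lie group has only countably many conjugacy classes of closed subgroups that can be realized as isotropy of a smooth action (see \cite{bre,td}) together with connectedness of $M$, a Baire-category argument stratified by the pair $(\dim\Gamma_{p},\,\#\Gamma_{p}/\Gamma_{p}^{0})$ produces a closed subgroup $H$ such that $M_{(H)}$ has nonempty interior and $(H)$ is minimal among isotropy classes with this property. Openness of $M_{(H)}$ then follows from the slice theorem: in a tube $\Gamma\times_{H}V$ around a point with isotropy $H$, the isotropy of any other point is subconjugate to $H$ and hence, by minimality, conjugate to $H$. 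Density is obtained by applying the slice theorem at an arbitrary point $p'\in M$ and invoking the linear version of the same argument for the representation of $\Gamma_{p'}$ on its slice $V'$: the principal orbit type for this linear action is realized on an open dense invariant subset of $V'$, and by minimality that type is $(H)$, so every neighborhood in $M$ meets $M_{(H)}$.

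For the smooth structure on $M_{(H)}/\Gamma$ and the bundle claim, I would restrict the slice model to $M_{(H)}$. After conjugating, I may assume $\Gamma_{p}=H$; the points of the slice $V$ that lie in $M_{(H)}$ are exactly the fixed set $V^{H}$, a linear subspace of $V$. Thus the $\Gamma$-invariant tube around $\Gamma p$ inside $M_{(H)}$ is $\Gamma\times_{H}V^{H}$, whose $\Gamma$-quotient is the Euclidean open set $V^{H}$ of dimension $m-\dim(\Gamma/H)$. These charts patch into a smooth manifold structure on $M_{(H)}/\Gamma$ and simultaneously exhibit $M_{(H)}\rightarrow M_{(H)}/\Gamma$ as a locally trivial fiber bundle with fiber $\Gamma/H$.

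The main obstacle, in my view, is the density step: one must guarantee that the generic slice representation contains points with isotropy \emph{exactly} of the minimal class $(H)$, rather than merely subconjugate to the ambient isotropy. This reduces to the linear case of a compact group acting orthogonally on a vector space, where the principal isotropy is realized on an open dense invariant subset; the essential combinatorial input is the countability of isotropy conjugacy classes for smooth actions of compact Lie groups, which is proved using the representation theory of compact groups in both \cite{bre} and \cite{td}. Since the statement is classical, a self-contained paper would simply cite one of these references rather than reproducing the argument.
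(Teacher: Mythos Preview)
Your proposal is correct and, in fact, goes well beyond what the paper does: the paper's entire ``proof'' of this theorem is a citation to Theorems IV.3.1, IV.3.3 and IV.3.8 in Bredon \cite{bre} and Theorem I.5.11 in tom Dieck \cite{td}. The argument you sketch---slice theorem, minimality in the subconjugacy order, openness and density of the principal stratum, and the tube model $\Gamma\times_{H}V^{H}$ yielding the bundle structure and the dimension of the orbit space---is precisely the standard proof found in those references, so you have anticipated exactly what lies behind the citation; your own closing remark that ``a self-contained paper would simply cite one of these references'' is on the nose.
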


\begin{proof}
See Theorems IV.3.1, IV.3.3 and IV.3.8 in \cite{bre}, or Theorem I.5.11 in
\cite{td}.
\end{proof}

Next, we derive our main results from the previous three theorems.

\begin{proof}
[\textbf{Proof of Theorem \ref{Theorem : Main1}}]By Theorem
\ref{thm:principal_orbit}, $M$ contains an open dense subset $\Omega:=M_{(H)}$
such that the $\Gamma$-orbit of each point $p\in\Omega$ is $\Gamma
$-diffeomorphic to $\Gamma/H$ for some fixed closed subgroup $H$ of $\Gamma.$
Moreover, $\Gamma p$ has a $\Gamma$-invariant neighborhood $\Omega_{p}$
contained in $\Omega$ which is $\Gamma$-diffeomorphic to $\mathbb{B}%
\times\Gamma/H,$ where $\mathbb{B}$ is the euclidean unit ball of dimension
$m-\dim(\Gamma p)$. Since we are assuming that $\dim(\Gamma p)<m,$ for any
given $k\in\mathbb{N}$ we may choose $k$ different $\Gamma$-orbits $\Gamma
p_{1},\ldots,\Gamma p_{k}\subset\Omega$ and $\Gamma$-invariant neighborhoods
$\Omega_{p_{i}}$ as before, with $\Omega_{p_{i}}\cap\Omega_{p_{j}}=\emptyset$
if $i\neq j.$ Then, we can choose a $\Gamma$-invariant function $\omega_{i}%
\in\mathcal{C}_{c}^{\infty}(\Omega_{p_{i}})$ for each $i=1,\ldots,k$.

Let $W:=\text{span}\{\omega_{1},\dots,\omega_{k}\}$ be the linear subspace of
$H_{g}^{1}(M)^{\Gamma}$ spanned by $\{\omega_{1},\ldots,\omega_{k}\}$. As
$\omega_{i}$ and $\omega_{j}$ have disjoint supports for $i\neq j,$ the set
$\{\omega_{1},\dots,\omega_{k}\}$ is orthogonal in $H_{g}^{1}(M)^{\Gamma}$.
Hence, $\dim W=k.$ On the other hand, as $\dim(\Gamma p)\geq1,$ we have that
$\#\Gamma p=\infty$ for every $p\in M.$ So, by Theorem
\ref{Theorem : Compactness}, $J_{g}$ satisfies $(PS)_{\tau}^{\Gamma}$ in
$H_{g}^{1}(M)$ for every $\tau\in\mathbb{R}$. Therefore, Theorem
\ref{thm: Variational principle} yields at least one positive and $k-1$ nodal
$\Gamma$-invariant solutions to problem
\eqref{Equation : generalized Yamabe PDE}. As $k\in\mathbb{N}$ is arbitrary,
we conclude that there are infinitely many nodal solutions.
\end{proof}

\begin{proof}
[\textbf{Proof of Theorem \ref{Theorem : Main2}}]By Theorem
\ref{thm:principal_orbit}, after replacing $\Omega$ by a $\Lambda$-invariant
open subset of it, if necessary, we may assume that $\Lambda p$ is $\Lambda
$-diffeomorphic to $\Lambda/H$ for every $p\in\Omega$ and some fixed subgroup
$H$ of $\Lambda$. Let $\mathcal{P}_{1}(\Omega)$ be the family of all nonempty
$\Lambda$-invariant open subsets of $\Omega$ and, for each $\tilde{\Omega}%
\in\mathcal{P}_{1}(\Omega),$ set%
\[
\mathcal{D}(\tilde{\Omega}):=\{\varphi\in\mathcal{C}_{c}^{\infty}%
(\tilde{\Omega}):\varphi\text{ is }\Lambda\text{-invariant, }\varphi
\neq0,\text{ }\Vert\varphi\Vert_{h,a,b}^{2}=|\varphi|_{h,c,2^{\ast}}^{2^{\ast
}}\}.
\]
For each $k\in\mathbb{N}$ let
\[
\mathcal{P}_{k}(\Omega):=\{(\Omega_{1},\ldots,\Omega_{k}):\Omega_{i}%
\in\mathcal{P}_{1}(\Omega),\text{ \ }\Omega_{i}\cap\Omega_{j}=\emptyset\text{
if }i\neq j\}.
\]
Arguing as in the proof of Theorem \ref{Theorem : Main1} we see that
$\mathcal{P}_{k}(\Omega)\neq\emptyset$ and $\mathcal{D}(\tilde{\Omega}%
)\neq\emptyset.$ Set%
\[
\tau_{k}:=\inf\left\{  \sum_{i=1}^{k}\frac{1}{m}\Vert\varphi_{i}\Vert
_{h,a,b}^{2}:\varphi_{i}\in\mathcal{D}(\Omega_{i}),\text{ \ }(\Omega
_{1},\ldots,\Omega_{k})\in\mathcal{P}_{k}(\Omega)\right\}  ,
\]
and define%
\[
\ell_{k}:=\left(  \frac{1}{m}S^{m/2}\right)  ^{-1}\tau_{k}.
\]
Next, we show that the sequence $(\ell_{k})$ has the desired property.

Fix $k\in\mathbb{N}$, and let $(M,g)$ be a Riemanniann manifold and $\Gamma$
be a closed subgroup of Isom$_{g}(M)$ which satisfy (1)-(4). As $g=h$ in
$\Omega$ and $\Gamma$ is a subgroup of $\Lambda,$ extending $\varphi\in
C_{c}^{\infty}(\tilde{\Omega})$ by zero outside $\tilde{\Omega}$, we have that
$\mathcal{D}(\tilde{\Omega})\subset\mathcal{N}_{g}^{\Gamma}$ for every
$\tilde{\Omega}\in\mathcal{P}_{1}(\Omega)$, $J_{g}(\varphi)=\frac{1}{m}%
\Vert\varphi\Vert_{h,a,b}^{2}$ for every $\varphi\in\mathcal{D}(\tilde{\Omega
})$ and $\tau_{1}\geq\tau_{g}^{\Gamma}>0$. Since we are assuming that%
\[
\ell_{a,c}^{\Gamma}:=\min\limits_{p\in M}\frac{a(p)^{m/2}\,\#\Gamma
p}{c(p)^{(m-2)/2}}>\ell_{k},
\]
we may choose $\varepsilon\in(0,\tau_{1})$ such that $\tau_{k}+\varepsilon
<\ell_{a,c}^{\Gamma}(\frac{1}{m}S^{m/2})$. Then, by definition of $\tau_{k}$,
there exist $(\Omega_{1},\ldots,\Omega_{k})\in\mathcal{P}_{k}(\Omega)$ and
$\omega_{i}\in\mathcal{D}(\Omega_{i}),$ such that
\[
\tau_{k}\leq%
%TCIMACRO{\tsum \limits_{i=1}^{k}}%
%BeginExpansion
{\textstyle\sum\limits_{i=1}^{k}}
%EndExpansion
J_{g}(\omega_{i})<\tau_{k}+\varepsilon.
\]
For each $n=1,\ldots,k$ set $W_{n}:=\text{span}\{\omega_{1}\ldots,\omega
_{n}\}$. As $\omega_{i}$ and $\omega_{j}$ have disjoint supports for $i\neq
j,$ the set $\{\omega_{1},\dots,\omega_{k}\}$ is orthogonal in $H_{g}%
^{1}(M)^{\Gamma}$. Hence, $\dim W_{n}=n.$ Moreover, if $u\in W_{n},$
$u=\sum_{i=1}^{n}t_{i}\omega_{i}$, then
\eqref{Equation : Maximum lies in Nehari} yields
\[
J_{g}(u)=%
%TCIMACRO{\tsum \limits_{i=1}^{n}}%
%BeginExpansion
{\textstyle\sum\limits_{i=1}^{n}}
%EndExpansion
J_{g}(t_{i}\omega_{i})\leq%
%TCIMACRO{\tsum \limits_{i=1}^{n}}%
%BeginExpansion
{\textstyle\sum\limits_{i=1}^{n}}
%EndExpansion
J_{g}(\omega_{i})<\tau_{k}+\varepsilon.
\]
Therefore,
\[
\sigma_{n}:=\sup_{W_{n}}J_{g}\leq\tau_{k}+\varepsilon<\ell_{a,c}^{\Gamma
}(\frac{1}{m}S^{m/2}).
\]
So Theorems \ref{Theorem : Compactness} and \ref{thm: Variational principle}
yield a positive critical point $u_{1}$ and $n-1$ pairs of sign changing
critical points $\pm u_{n,2},\ldots,\pm u_{n,n}$ of $J_{g}$\ in $H_{g}%
^{1}(M)^{\Gamma}$ such that $J_{g}(u_{1})=\tau_{g}^{\Gamma}$ and
\[
J_{g}(u_{n,j})\leq\sigma_{n}\text{\qquad for all \ }j=2,\ldots,n.
\]

Now, for each $2\leq n\leq k$, we inductively choose $u_{n}\in\{u_{n,2}%
,\ldots,u_{n,n}\}$ such that $u_{n}\neq u_{j}$ for all $1\leq j<n$. In order
to show that the $u_{j}$'s may be suitable chosen to satisfy
\eqref{Theorem : Main2 inequality}, we need the following inequalities.
Observe that $\tau_{1}\leq J_{g}(\omega_{i})$ for every $i=1,\ldots,k$.
Consequently, for each $2\leq n\leq k$ we obtain
\[
\sigma_{n}+(k-n)\tau_{1}\leq%
%TCIMACRO{\tsum \limits_{i=1}^{n}}%
%BeginExpansion
{\textstyle\sum\limits_{i=1}^{n}}
%EndExpansion
J_{g}(\omega_{i})+%
%TCIMACRO{\tsum \limits_{i=n+1}^{k}}%
%BeginExpansion
{\textstyle\sum\limits_{i=n+1}^{k}}
%EndExpansion
J_{g}(\omega_{i})<\tau_{k}+\varepsilon.
\]
As $\varepsilon\in(0,\tau_{1})$ we conclude that
\[
J_{g}(u_{n})\leq\sigma_{n}<\tau_{k}\text{ \ if }n<k\qquad\text{and}\qquad
J_{g}(u_{k})\leq\sigma_{k}<\tau_{k}+\varepsilon.
\]
With these inequalities, the argument in the last two steps of the proof of
Theorem 2.2 in \cite{cf2} goes through to show that the $u_{j}^{\prime}s$ may
be chosen so that \eqref{Theorem : Main2 inequality} is satisfied.
\end{proof}

\begin{proof}
[\textbf{Proof of Corollary \ref{cor:main}}]Let $\mathfrak{M}$ be the space of
Riemannian metrics on $M$ with the distance induced by the $\mathcal{C}^{0}%
$-norm in the space of covariant $2$-tensor fields $\tau$ on $M,$ taken with
respect to the fixed metric $h$, i.e.%
\[
\left\Vert \tau\right\Vert _{\mathcal{C}^{0}}:=\max_{p\in M}\,\max_{X,Y\in
T_{p}M\smallsetminus\{0\}}\frac{\left\vert \tau(X,Y)\right\vert }{\left\vert
X\right\vert _{h}\left\vert Y\right\vert _{h}}.
\]
As the functions $\mathfrak{M}\rightarrow\mathcal{C}^{0}(M)$ given by
$g\mapsto R_{g}$ and $g\mapsto\sqrt{\left\vert g\right\vert }$ are continuous,
where $\left\vert g\right\vert :=\det(g),$ the sets%
\begin{align*}
\mathcal{O}_{1}  &  :=\left\{  g\in\mathfrak{M}:\frac{1}{2}R_{h}%
(p)<R_{g}(p)<2R_{h}(p)\text{ \ }\forall p\in M\smallsetminus\Omega\right\}
,\\
\mathcal{O}_{2}  &  :=\left\{  g\in\mathfrak{M}:\frac{1}{2}\sqrt{\left\vert
h\right\vert (p)}<\sqrt{\left\vert g\right\vert (p)}<2\sqrt{\left\vert
h\right\vert (p)}\text{ \ }\forall p\in M\smallsetminus\Omega\right\}  ,
\end{align*}
are open neighborhoods of $h$ in $\mathfrak{M}$. Moreover, since%
\[
|\nabla_{g}u(p)|_{g}=\max_{X\in T_{p}M\smallsetminus\{0\}}\frac{\left\vert
duX\right\vert }{\left\vert X\right\vert _{g}},
\]
for every $u\in\mathcal{C}^{\infty}(M)$ we have that%
\[
\frac{1}{2}|\nabla_{h}u|_{h}^{2}\leq|\nabla_{g}u|_{g}^{2}\leq2|\nabla
_{h}u|_{h}^{2}\text{\qquad if }\left\Vert g-h\right\Vert _{\mathcal{C}^{0}%
}<\frac{1}{2}.
\]
Set $\mathcal{O}:=\{g\in\mathfrak{M}:\left\Vert g-h\right\Vert _{\mathcal{C}%
^{0}}<\frac{1}{2}\}\cap\mathcal{O}_{1}\cap\mathcal{O}_{2}.$ Then there are
positive constants $C_{1}\leq1$ and $C_{2}\geq1$ such that, for every
$g\in\mathcal{O}$ and $u\in\mathcal{C}^{\infty}(M),$%
\begin{align*}
\int_{M\smallsetminus\Omega}\left[  |\nabla_{g}u|_{g}^{2}+\mathfrak{c}%
_{m}R_{g}u^{2}\right]  dV_{g}  &  \geq C_{1}\int_{M\smallsetminus\Omega
}\left[  |\nabla_{h}u|_{h}^{2}+\mathfrak{c}_{m}R_{h}u^{2}\right]  dV_{h},\\
\int_{M\smallsetminus\Omega}\left[  |\nabla_{g}u|_{g}^{2}+u^{2}\right]
dV_{g}  &  \leq C_{2}\int_{M\smallsetminus\Omega}\left[  |\nabla_{h}u|_{h}%
^{2}+u^{2}\right]  dV_{h}.
\end{align*}
Therefore, if $g\in\mathcal{O}$ and $g=h$ in $\Omega$, we have that
\[
\frac{\int_{M}\left[  |\nabla_{g}u|_{g}^{2}+\mathfrak{c}_{m}R_{g}u^{2}\right]
dV_{g}}{\int_{M}\left[  |\nabla_{g}u|_{g}^{2}+u^{2}\right]  dV_{g}}\geq
\frac{C_{1}\int_{M}\left[  |\nabla_{h}u|_{h}^{2}+\mathfrak{c}_{m}R_{h}%
u^{2}\right]  dV_{h}}{C_{2}\int_{M}\left[  |\nabla_{h}u|_{h}^{2}+u^{2}\right]
dV_{h}}%
\]
for every $u\in\mathcal{C}^{\infty}(M).$ As $\Delta_{h}+\mathfrak{c}_{m}R_{h}$
is coercive in $H_{h}^{1}(M)$, this inequality implies that $\Delta
_{g}+\mathfrak{c}_{m}R_{g}$ is coercive in $H_{g}^{1}(M).$

Set $(\Omega,h)$ as given, $\Lambda=\mathbb{S}^{1},$ $a\equiv1,$
$b=\mathfrak{c}_{m}R_{g}$ and $c\equiv\kappa.$ Then, if $g\in\mathcal{O}$ is
such that $g=h$ in $\Omega$ and$\ \Gamma_{n}\subset\,$\emph{Isom}$_{g}(M)$ for
some $n>\kappa^{(m-2)/2}\ell_{k},$ these data satisfy assumptions (1)-(4) in
Theorem \ref{Theorem : Main2}, and the conclusion follows.
\end{proof}

\section{Nonexistence of ground state solutions}

\label{sec:nonexistence}In this section we prove Proposition
\ref{prop:nonexistence}.

If $h$ and $g=\varphi^{2^{\ast}-2}h,$ with $\varphi\in\mathcal{C}^{\infty
}(M),$ $\varphi>0,$ are two conformally equivalent Riemannian metrics on an
$m$-dimensional manifold $M,$ the scalar curvatures $R_{h}$ and $R_{g}$ are
related by the equation%
\begin{equation}
\Delta_{h}\varphi+\mathfrak{c}_{m}R_{h}\varphi=\mathfrak{c}_{m}R_{g}%
\varphi^{2^{\ast}-1}. \label{eq:ne1}%
\end{equation}
Let $v=\varphi u\in\mathcal{C}^{\infty}(M).$ An easy computation shows that
\[
\Delta_{g}u=\varphi^{-2^{\ast}}\left(  \varphi\Delta_{h}v-v\Delta_{h}%
\varphi\right)
\]
and, combining this identity with (\ref{eq:ne1}), we obtain that%
\begin{equation}
\Delta_{g}u+\mathfrak{c}_{m}R_{g}u=\varphi^{1-2^{\ast}}\left(  \Delta
_{h}v+\mathfrak{c}_{m}R_{h}v\right)  . \label{eq:ne2}%
\end{equation}

Let $(\mathbb{S}^{m},g_{0})$ be the standard sphere and $b\in\mathcal{C}%
^{\infty}(\mathbb{S}^{m})$ be such that $b\geq\mathfrak{c}_{m}R_{g_{0}}%
=\frac{m(m-2)}{4}$ and $b\not \equiv \mathfrak{c}_{m}R_{g_{0}}$. Let
$p\in\mathbb{S}^{m}$ be the north pole and $\sigma:\mathbb{S}^{m}%
\smallsetminus\{p\}\rightarrow\mathbb{R}^{m}$ be the stereographic projection.
$\sigma$ is a conformal diffeomorphism and the coordinates of standard metric
$g_{0}$ given by the chart $\sigma^{-1}:\mathbb{R}^{m}\rightarrow
\mathbb{S}^{m}\smallsetminus\{p\}$ are $(g_{0})_{ij}=\varphi^{2^{\ast}%
-2}\delta_{ij},$ where
\[
\varphi(x):=\left(  \frac{2}{1+\left\vert x\right\vert ^{2}}\right)
^{(m-2)/2}.
\]
Set $\widetilde{b}:=\varphi^{2^{\ast}-2}\left(  b\circ\sigma^{-1}%
-\mathfrak{c}_{m}R_{g_{0}}\right)  $ and, for $u\in\mathcal{C}^{\infty
}(\mathbb{S}^{m}),$ set $v=\varphi(u\circ\sigma^{-1}).$ As $dV_{g_{0}}%
=\varphi^{2^{\ast}}dx,$ using (\ref{eq:ne2}) we obtain that%
\begin{align*}
\int_{\mathbb{S}^{m}}\left[  |\nabla_{g_{0}}u|_{g_{0}}^{2}+\mathfrak{c}%
_{m}R_{g_{0}}u^{2}\right]  dV_{g_{0}}  &  =\int_{\mathbb{R}^{m}}\left\vert
\nabla v\right\vert ^{2}dx,\\
\int_{\mathbb{S}^{m}}(b-\mathfrak{c}_{m}R_{g_{0}})u^{2}dV_{g}  &
=\int_{\mathbb{R}^{m}}\widetilde{b}v^{2}dx,\\
\int_{\mathbb{S}^{m}}\left\vert u\right\vert ^{2^{\ast}}dV_{g_{0}}  &
=\int_{\mathbb{R}^{m}}\left\vert v\right\vert ^{2^{\ast}}dx.
\end{align*}
Hence,%
\[
\inf_{\substack{u\in\mathcal{C}^{\infty}(\mathbb{S}^{m})\\u\neq0}}\frac
{\int_{\mathbb{S}^{m}}\left[  |\nabla_{g_{0}}u|_{g_{0}}^{2}+bu^{2}\right]
dV_{g_{0}}}{\left(  \int_{\mathbb{S}^{m}}|u|^{2^{\ast}}dV_{g_{0}}\right)
^{2/2^{\ast}}}=\inf_{\substack{v\in D^{1,2}(\mathbb{R}^{m})\\v\neq0}%
}\frac{\int_{\mathbb{R}^{m}}\left[  \left\vert \nabla v\right\vert
^{2}dx+\widetilde{b}v^{2}\right]  dx}{\left(  \int_{\mathbb{R}^{m}}\left\vert
v\right\vert ^{2^{\ast}}dx\right)  ^{2/2^{\ast}}}=:S_{b}.
\]
If $b\equiv\frac{m(m-2)}{4}$ then $\widetilde{b}\equiv0$ and $S_{\frac
{m(m-2)}{4}}=:S$ is the best Sobolev constant for the embedding $D^{1,2}%
(\mathbb{R}^{m})\hookrightarrow L^{2^{\ast}}(\mathbb{R}^{m}).$ This constant
is attained at the \textit{standard bubble}
\[
U(x)=\left[  m(m-2)\right]  ^{\frac{m-2}{4}}\left(  \frac{1}{1+\left\vert
x\right\vert ^{2}}\right)  ^{\frac{m-2}{2}}%
\]
and at any dilation $U_{\varepsilon}(x):=\varepsilon^{\frac{2-m}{2}}U\left(
\frac{x}{\varepsilon}\right)  $ of it, with $\varepsilon>0.$

\begin{lemma}
If $b\geq\frac{m(m-2)}{4}$ then $S_{b}=S.$
\end{lemma}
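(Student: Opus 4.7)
The plan is in two steps. First, since $\widetilde{b}\ge 0$ by the hypothesis $b\ge \mathfrak{c}_m R_{g_0}=\frac{m(m-2)}{4}$, every nonzero $v\in D^{1,2}(\mathbb{R}^m)$ satisfies
\[
\frac{\int_{\mathbb{R}^m}\bigl[|\nabla v|^2+\widetilde{b}\,v^2\bigr]\,dx}{\bigl(\int_{\mathbb{R}^m}|v|^{2^{\ast}}\,dx\bigr)^{2/2^{\ast}}}\;\ge\;\frac{\int_{\mathbb{R}^m}|\nabla v|^2\,dx}{\bigl(\int_{\mathbb{R}^m}|v|^{2^{\ast}}\,dx\bigr)^{2/2^{\ast}}},
\]
so taking infima yields $S_b\ge S$ at once.

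For the matching upper bound $S_b\le S$, I would test the Rayleigh quotient on the family of dilated standard bubbles $U_\varepsilon(x)=\varepsilon^{-(m-2)/2}U(x/\varepsilon)$ and let $\varepsilon\to 0^+$. The gradient and $L^{2^{\ast}}$ norms of $U_\varepsilon$ are scale invariant, so $\int|\nabla U_\varepsilon|^2\,dx=\int|\nabla U|^2\,dx$ and $\int U_\varepsilon^{2^{\ast}}\,dx=\int U^{2^{\ast}}\,dx$, and their quotient is precisely $S$. The entire argument therefore reduces to proving the concentration estimate
\[
\int_{\mathbb{R}^m}\widetilde{b}\,U_\varepsilon^2\,dx\;\longrightarrow\;0\qquad\text{as }\varepsilon\to 0^+.
\]

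The key input is the explicit form $\varphi(x)^{2^{\ast}-2}=4/(1+|x|^2)^2$ together with the boundedness of $b$ on $\mathbb{S}^m$, which combine to give the pointwise decay
\[
0\le \widetilde{b}(x)\le \frac{C}{(1+|x|^2)^2}.
\]
After the rescaling $x=\varepsilon y$, one finds
\[
\int_{\mathbb{R}^m}\widetilde{b}(x)\,U_\varepsilon(x)^2\,dx\;=\;\varepsilon^2\int_{\mathbb{R}^m}\widetilde{b}(\varepsilon y)\,U(y)^2\,dy\;\le\;C\varepsilon^2\int_{\mathbb{R}^m}\frac{U(y)^2}{(1+\varepsilon^2|y|^2)^2}\,dy.
\]

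The main obstacle is that $U\in L^2(\mathbb{R}^m)$ only when $m\ge 5$, so the final estimate must be treated dimension by dimension. For $m\ge 5$, dominated convergence (with dominant $CU^2\in L^1$) turns the right-hand side into $O(\varepsilon^2)$. For $m=3,4$ the integral $\int U^2$ diverges, but splitting the region of integration at $|y|=1/\varepsilon$ and inserting the asymptotics $U(y)\sim |y|^{2-m}$ as $|y|\to\infty$ produces contributions of order $\varepsilon^{-1}$ when $m=3$ and $\log(1/\varepsilon)$ when $m=4$; in both cases the $\varepsilon^2$ prefactor absorbs them. Hence the perturbation term vanishes in the limit, giving $S_b\le S$ and completing the proof.
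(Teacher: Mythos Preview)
Your proof is correct and follows essentially the same approach as the paper: both show $S_b\ge S$ trivially from $\widetilde{b}\ge 0$, then test the quotient on the dilated bubbles $U_\varepsilon$ and use the decay $\widetilde{b}(x)\le C(1+|x|^2)^{-2}$ to prove $\int_{\mathbb{R}^m}\widetilde{b}\,U_\varepsilon^2\,dx\to 0$. The only technical difference is in how this last integral is estimated: the paper introduces a parameter $\alpha\in(\tfrac12,1)$ and the pointwise bound $\widetilde{b}(x)U_\varepsilon^2(x)\le C\varepsilon^{m-2}(\varepsilon^2+|x|^2)^{-(m-2+\alpha)}$, then splits at $|x|=\varepsilon$ to obtain $O(\varepsilon^2)+O(\varepsilon^{2(1-\alpha)})$ uniformly in all dimensions $m\ge 3$, whereas you rescale first and then treat $m\ge 5$ by dominated convergence and $m=3,4$ separately by a radial splitting at $|y|=1/\varepsilon$. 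Both routes are standard and lead to the same conclusion; the paper's $\alpha$-trick simply avoids the case distinction.
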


\begin{proof}
Clearly, $S_{b}\geq S.$ Fix $\alpha\in(\frac{1}{2},1).$ Then, for all
$\varepsilon\in(0,1),$
\[
\widetilde{b}(x)U_{\varepsilon}^{2}(x)\leq C\left(  \frac{1}{1+\left\vert
x\right\vert ^{2}}\right)  ^{2}\left(  \frac{\varepsilon}{\varepsilon
^{2}+|x|^{2}}\right)  ^{m-2}\leq C\varepsilon^{m-2}\left(  \frac
{1}{\varepsilon^{2}+|x|^{2}}\right)  ^{m-2+\alpha}.
\]
Hence, we have that
\begin{align*}
0  &  \leq\int_{\mathbb{R}^{m}}\widetilde{b}(x)U_{\varepsilon}^{2}%
(x)dx=\int_{\left\vert x\right\vert \leq\varepsilon}\widetilde{b}%
(x)U_{\varepsilon}^{2}(x)dx+\int_{\left\vert x\right\vert \geq\varepsilon
}\widetilde{b}(x)U_{\varepsilon}^{2}(x)dx\\
&  \leq C\varepsilon^{2}\int_{\left\vert y\right\vert \leq1}U^{2}%
(y)dy+C\varepsilon^{m-2}\int_{\left\vert x\right\vert \geq\varepsilon
}|x|^{-2m+4-2\alpha}dx\\
&  =C\varepsilon^{2}+C\varepsilon^{2(1-\alpha)}\longrightarrow0\text{\qquad as
\ }\varepsilon\rightarrow0.
\end{align*}
Therefore,
\[
\lim_{\varepsilon\rightarrow0}\frac{\int_{\mathbb{R}^{m}}\left(  \left\vert
\nabla U_{\varepsilon}\right\vert ^{2}+\widetilde{b}U_{\varepsilon}%
^{2}\right)  dx}{\left(  \int_{\mathbb{R}^{m}}\left\vert U_{\varepsilon
}\right\vert ^{2^{\ast}}dx\right)  ^{2/2^{\ast}}}=\frac{\int_{\mathbb{R}^{m}%
}\left\vert \nabla U_{\varepsilon}\right\vert ^{2}dx}{\left(  \int
_{\mathbb{R}^{m}}\left\vert U_{\varepsilon}\right\vert ^{2^{\ast}}dx\right)
^{2/2^{\ast}}}=S.
\]
This shows that $S\geq S_{b}.$
\end{proof}

\begin{proof}
[\textbf{Proof of Proposition \ref{prop:nonexistence}}]If $S_{b}$ were
attained at some $v\in D^{1,2}(\mathbb{R}^{m})$ then, as $\widetilde{b}\geq0$
and $\widetilde{b}\not \equiv 0$, we would have that%
\[
S=S_{b}=\frac{\int_{\mathbb{R}^{m}}\left(  \left\vert \nabla v\right\vert
^{2}+\widetilde{b}v^{2}\right)  dx}{\left(  \int_{\mathbb{R}^{m}}\left\vert
v\right\vert ^{2^{\ast}}dx\right)  ^{2/2^{\ast}}}>\frac{\int_{\mathbb{R}^{m}%
}\left\vert \nabla v\right\vert ^{2}dx}{\left(  \int_{\mathbb{R}^{m}%
}\left\vert v\right\vert ^{2^{\ast}}dx\right)  ^{2/2^{\ast}}}\geq S.
\]
This is a contradiction.
\end{proof}

\section{Compactness}

\label{Section : Compactness}A classical result by Struwe \cite{s} provides a
complete description of the lack of compactness of the energy functional for
critical problems in a bounded smooth domain of $\mathbb{R}^{m}.$ Anisotropic
critical problems with symmetries were treated in \cite{cf2}. Palais-Smale
sequences of positive functions for some Yamabe-type problems on a closed
manifold were described by Druet, Hebey and Robert in \cite{dhr}, and
symmetric ones were treated in \cite{sa}.

In this section we apply concentration compactness methods to prove Theorem
\ref{Theorem : Compactness}.

Throughout this section, $(M,g)$ is a closed Riemannian manifold of dimension
$m\geq3$, $\Gamma$ is a closed subgroup of $\text{Isom}_{g}(M),$ and
$a,b,c\in\mathcal{C}^{\infty}(M)$ are $\Gamma$-invariant functions with
$a,c>0.$ We shall not assume that $-\,$div$_{g}(a\nabla_{g})+b$ is coercive,
except when we prove Theorem \ref{Theorem : Compactness}.

We use the notation introduced in the previous section. We start with the
following fact.

\begin{lemma}
\label{lem:PSbdd}Every Palais-Smale sequence for the functional $J_{g}$ is
bounded in $H_{g}^{1}(M).$
\end{lemma}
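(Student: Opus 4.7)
The plan follows the classical device for critical growth problems, with one twist required by the absence of a coercivity hypothesis. First I would form two linear combinations of $J_g(u_k)$ and $\langle J_g'(u_k), u_k\rangle$ to separate the quadratic and the critical parts. Writing
$$Q(u) := \int_M \bigl[a|\nabla_g u|_g^2 + b u^2\bigr]\,dV_g, \qquad B(u) := \int_M c|u|^{2^{\ast}}\,dV_g,$$
one verifies directly that
$$J_g(u_k) - \tfrac{1}{2^{\ast}}\langle J_g'(u_k),u_k\rangle = \tfrac{1}{m}Q(u_k), \qquad J_g(u_k) - \tfrac{1}{2}\langle J_g'(u_k),u_k\rangle = \tfrac{1}{m}B(u_k).$$
Since $J_g(u_k) \to \tau$ and $\|J_g'(u_k)\|_{(H_g^1)'} \to 0$, each left-hand side is bounded above by $C(1+\|u_k\|_g)$, hence so are $Q(u_k)$ and $B(u_k)$.

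The main obstacle is that, without coercivity, $Q(u_k)$ does not by itself control $\|u_k\|_g$: the term $\int_M b u_k^2\,dV_g$ is a priori indefinite. To sidestep this, I would exploit the bound on $B(u_k)$ together with the compactness of $M$ and the strict positivity of $c$. Writing $c_0 := \min_M c > 0$, we get $|u_k|_{g,2^{\ast}}^{2^{\ast}} \le C(1+\|u_k\|_g)$. Because $M$ is closed, the embedding $L_g^{2^{\ast}}(M) \hookrightarrow L_g^2(M)$ is continuous, so
$$|u_k|_{g,2}^2 \le C\bigl(1+\|u_k\|_g\bigr)^{2/2^{\ast}}.$$

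Finally, with $a_0 := \min_M a > 0$ and $\beta := \max_M |b|$, the inequality $Q(u_k) \ge a_0\int_M |\nabla_g u_k|_g^2\,dV_g - \beta\, |u_k|_{g,2}^2$ combined with the bound on $Q(u_k)$ yields
$$\|u_k\|_g^2 \le C\bigl(1+\|u_k\|_g\bigr) + C\bigl(1+\|u_k\|_g\bigr)^{2/2^{\ast}}.$$
Since $2/2^{\ast} = (m-2)/m < 1$, the right-hand side grows at most linearly in $\|u_k\|_g$ while the left-hand side is quadratic, forcing $(\|u_k\|_g)$ to be bounded. The only ingredient beyond the standard coercive argument is the compact Sobolev embedding $L_g^{2^{\ast}}(M) \hookrightarrow L_g^2(M)$, which absorbs the indefinite potential term.
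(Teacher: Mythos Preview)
Your argument is correct and follows essentially the same route as the paper: bound $B(u_k)$ via $J_g(u_k)-\tfrac12 J_g'(u_k)u_k$, use H\"older on the compact manifold to control $|u_k|_{g,2}^2$ by a sublinear power of $\|u_k\|_g$, then combine with the bound on $Q(u_k)$ to absorb the indefinite potential and conclude. One terminological quibble: the embedding $L_g^{2^*}(M)\hookrightarrow L_g^2(M)$ you invoke is just H\"older's inequality on a finite-measure space (as the paper phrases it), not a ``compact Sobolev embedding''---only continuity is used.
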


\begin{proof}
Hereafter, $C$ will denote a positive constant, not necessarily the same one.
Let $(u_{k})$ be a sequence in $H_{g}^{1}(M)$ such that $J_{g}(u_{k}%
)\rightarrow\tau$ and $J_{g}^{\prime}(u_{k})\rightarrow0$ in $\left(
H_{g}^{1}(M)\right)  ^{\prime}.$ Then,
\[
|u_{k}|_{g,2^{\ast}}^{2^{\ast}}\leq C\left(  \frac{1}{m}|u_{k}|_{g,c,2^{\ast}%
}^{2^{\ast}}\right)  =C\left(  J_{g}(u_{k})-\frac{1}{2}J_{g}^{\prime}%
(u_{k})u_{k}\right)  \leq C+o(\Vert u_{k}\Vert_{g}).
\]
Hence,%
\begin{equation}
\int_{M}\left[  a|\nabla_{g}u_{k}|_{g}^{2}+b|u_{k}|^{2}\right]  dV_{g}%
=2\left(  J_{g}(u_{k})+\frac{1}{2^{\ast}}|u_{k}|_{g,c,2^{\ast}}^{2^{\ast}%
}\right)  \leq C+o(\Vert u_{k}\Vert_{g}). \label{eq:PSbdd1}%
\end{equation}
Moreover, as $M$ is compact, using H\"{o}lder's inequality we obtain%
\begin{equation}
|u_{k}|_{g,2}^{2}\leq C|u_{k}|_{g,2^{\ast}}^{2}\leq C+o(\Vert u_{k}\Vert
_{g}^{2/2^{\ast}}). \label{eq:PSbdd2}%
\end{equation}
As $b$ is bounded, inequalities (\ref{eq:PSbdd1}) and (\ref{eq:PSbdd2}) yield%
\begin{align*}
a_{0}\Vert u_{k}\Vert_{g}^{2}  &  \leq\int_{M}\left[  a|\nabla_{g}u_{k}%
|_{g}^{2}+b|u_{k}|^{2}\right]  dV_{g}+\int_{M}\left(  -b+a_{0}\right)
u_{k}^{2}\,dV_{g}\\
&  \leq\int_{M}\left[  a|\nabla_{g}u_{k}|_{g}^{2}+b|u_{k}|^{2}\right]
dV_{g}+C|u_{k}|_{g,2}^{2}\\
&  \leq C+o(\Vert u_{k}\Vert_{g})+o(\Vert u_{k}\Vert_{g}^{2/2^{\ast}}),
\end{align*}
where $a_{0}:=\min_{M}a.$ This implies that $(u_{k})$ is bounded in $H_{g}%
^{1}(M).$
\end{proof}

Next, we consider the problem
\begin{equation}
\left\{
\begin{tabular}
[c]{c}%
$-\Delta v=|v|^{2^{\ast}-2}v,$\\
$v\in D^{1,2}(\mathbb{R}^{m}),$%
\end{tabular}
\right.  \label{Problem : limit}%
\end{equation}
and its associated energy functional
\[
J_{\infty}(v):=\frac{1}{2}\int_{\mathbb{R}^{m}}|\nabla v|^{2}dx-\frac
{1}{2^{\ast}}\int_{\mathbb{R}^{m}}|v|^{2^{\ast}}dx,\qquad v\in D^{1,2}%
(\mathbb{R}^{m}).
\]
The proof of Theorem \ref{Theorem : Compactness} will follow easily from the
following proposition.

\begin{proposition}
\label{Prop : Compactness}Assume that $b\equiv0.$ Let $(u_{k})\ $be a $\Gamma
$-invariant Palais-Smale sequence for $J_{g}$ at the level $\tau>0$ such that
$u_{k}\rightharpoonup0$ weakly in $H_{g}^{1}(M)$ but not strongly. Then, after
passing to a subsequence, there exist a point $p\in M$ and a nontrivial
solution $\widehat{v}$ to problem \eqref{Problem : limit} such that $\#\Gamma
p<\infty$ and
\begin{equation}
\tau\geq\left(  \frac{a(p)^{m/2}\,\#\Gamma p}{c(p)^{\left(  m-2\right)  /2}%
}\right)  J_{\infty}(\widehat{v})\geq\left(  \min_{q\in M}\frac{a(q)^{m/2}%
\,\#\Gamma q}{c(q)^{\left(  m-2\right)  /2}}\right)  \frac{1}{m}S^{m/2}.
\label{Prop : Compactess Inequality d}%
\end{equation}

\end{proposition}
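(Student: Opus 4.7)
The plan is to run the standard concentration--compactness/blow-up analysis of Struwe, carefully keeping track of the weights $a(p)$ and $c(p)$ and exploiting $\Gamma$-invariance to multiply the bubble count by $\#\Gamma p$.

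First I would note that by Lemma \ref{lem:PSbdd} the sequence $(u_k)$ is bounded in $H^1_g(M)$, so up to a subsequence $|\nabla_g u_k|_g^2\,dV_g\rightharpoonup\mu$ and $c|u_k|^{2^\ast}\,dV_g\rightharpoonup\nu$ in the sense of measures on the compact manifold $M$. Since $u_k\rightharpoonup 0$ but not strongly, a version of the Lions concentration--compactness principle on $(M,g)$ (obtained by working in a finite cover of $M$ by geodesic balls on which $g$ is close to the Euclidean metric and on which the usual Sobolev inequality applies) gives an at most countable set $\{p_i\}\subset M$ and weights $\mu_i,\nu_i>0$ with $\mu\geq\sum_i \mu_i\delta_{p_i}$, $\nu=\sum_i\nu_i\delta_{p_i}$, together with the reverse Sobolev-type bound $a(p_i)\mu_i\geq c(p_i)^{2/2^\ast}\nu_i^{2/2^\ast}\cdot S$. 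By $\Gamma$-invariance of $u_k$ the set $\{p_i\}$, as well as $\mu$ and $\nu$, are $\Gamma$-invariant, so each orbit $\Gamma p_i$ is entirely contained in the concentration set, and all its points carry the same weights $\mu_i,\nu_i$.

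Next I would pick one concentration point $p$ and perform a blow-up. Since $\nu$ has finite total mass $\leq\liminf c|u_k|^{2^\ast}\,dV_g<\infty$ and every point of $\Gamma p$ contributes the same positive mass $\nu_p$, the orbit $\Gamma p$ must be finite, i.e.\ $\#\Gamma p<\infty$. In a normal coordinate chart $\exp_p:B_\rho(0)\subset T_pM\to M$ the metric $g$ is Euclidean at $p$; choose a scale $\varepsilon_k\to 0$ capturing the concentration (e.g.\ via a standard concentration function argument) and set
\[
v_k(y):=\varepsilon_k^{(m-2)/2}\,u_k(\exp_p(\varepsilon_k y)).
\]
The usual computation shows that the rescaled functions $v_k$ are bounded in $D^{1,2}(\R^m)$ and, up to a subsequence, converge weakly to some $v\in D^{1,2}(\R^m)$ that solves
\[
-a(p)\Delta v=c(p)\,|v|^{2^\ast-2}v\quad\text{on }\R^m,
\]
because $a,b,c$ are continuous (so freeze at their values at $p$), $b$ disappears in the rescaling, and the metric coefficients converge to $\delta_{ij}$. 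Nontriviality of $v$ follows from the fact that the concentration at $p$ is not lost under the rescaling. Setting $\widehat v:=\lambda^{-1}v$ with $\lambda:=(a(p)/c(p))^{(m-2)/4}$ turns this into $-\Delta\widehat v=|\widehat v|^{2^\ast-2}\widehat v$, so $\widehat v$ is a nontrivial solution of \eqref{Problem : limit}.

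Finally I would do the energy bookkeeping. A direct change of variables shows that the energy carried by the bubble at $p$ equals $\tfrac{a(p)^{m/2}}{c(p)^{(m-2)/2}}\,J_\infty(\widehat v)$: indeed $J_\infty(\widehat v)=\frac{1}{m}\int_{\R^m}|\nabla\widehat v|^2$, while the standard computation gives
\[
\lim_{k\to\infty}\Bigl(\tfrac12 a(p)\!\int|\nabla v_k|^2-\tfrac1{2^\ast}c(p)\!\int|v_k|^{2^\ast}\Bigr)=\tfrac{1}{m}a(p)\lambda^2\!\int_{\R^m}|\nabla\widehat v|^2=\tfrac{a(p)^{m/2}}{c(p)^{(m-2)/2}}J_\infty(\widehat v).
\]
Because $u_k$ is $\Gamma$-invariant, the same bubble is glued at every point of the orbit $\Gamma p$ (by translating the blow-up through the isometries $\gamma\in\Gamma$; these contributions are asymptotically disjointly supported since $\varepsilon_k\to 0$), giving the full contribution $\#\Gamma p\cdot \tfrac{a(p)^{m/2}}{c(p)^{(m-2)/2}}J_\infty(\widehat v)$ to $\lim J_g(u_k)=\tau$. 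Combined with the well-known bound $J_\infty(\widehat v)\geq \tfrac{1}{m}S^{m/2}$ (minimal energy of a nontrivial solution of the critical problem on $\R^m$, achieved by the Aubin--Talenti bubble), this yields \eqref{Prop : Compactess Inequality d}.

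The main obstacle I anticipate is the blow-up analysis itself: namely, choosing the correct scale $\varepsilon_k$ (and center, which by $\Gamma$-invariance can be taken to be an exact concentration point), and then proving that the rescaled sequence converges to a \emph{nontrivial} $D^{1,2}(\R^m)$-solution while simultaneously showing that, after subtracting this bubble (and its $\Gamma$-translates), the remainder is still a Palais--Smale sequence whose norm and energy drop by the amount just computed; this ``bubble extraction'' is the technical heart of the argument and has to be done carefully to make the energies on the orbit $\Gamma p$ add up without cross-interference terms.
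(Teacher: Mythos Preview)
Your outline follows the same concentration--compactness/blow-up strategy as the paper, but the technical implementation differs in two respects worth noting.

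First, the paper does not pass through Lions' second concentration--compactness lemma to locate the concentration set. Instead it works directly with the Levy concentration function
\[
Q_k(r):=\max_{q\in M}\int_{B_g(q,r)}c|u_k|^{2^\ast}\,dV_g,
\]
which simultaneously selects moving centers $p_k\to p$ and scales $r_k\in(0,\delta]$ so that $Q_k(r_k)=\lambda$ for a \emph{fixed small} $\lambda$ (chosen below a threshold depending on $S$, $\min a$, $\max c$). This specific smallness of $\lambda$ is exactly what drives the nontriviality of the weak limit (the paper's Claim~1): testing the equation against $\hat\vartheta_k^2 u_k$ and using Sobolev's inequality on balls of radius $\varrho$ one can absorb the right-hand side because $(\lambda)^{2/m}$ times the relevant constants is $<\tfrac12$. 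Your sketch fixes the center at the exact Lions concentration point $p$ and leaves the scale ``to a standard concentration function argument''; this can be made to work, but in practice one still needs the moving centers and the small-mass calibration, so you end up reproducing the Levy-function step anyway.

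Second, for the energy bound the paper does \emph{not} perform any bubble subtraction or profile decomposition. It simply uses Fatou's lemma together with the $\Gamma$-invariance of $c|u_k|^{2^\ast}$: picking $n$ distinct points $\gamma_1 p,\dots,\gamma_n p$ in the orbit and disjoint balls around them gives
\[
\frac{n}{m}\int_{\R^m}c(p)|v|^{2^\ast}\,dx\;\le\;\liminf_{k}\frac{n}{m}\int_{B_g(p_k,\rho)}c|u_k|^{2^\ast}\,dV_g\;\le\;\lim_k\frac{1}{m}\int_M c|u_k|^{2^\ast}\,dV_g=\tau,
\]
from which both $\#\Gamma p<\infty$ and the inequality \eqref{Prop : Compactess Inequality d} follow at once. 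Your displayed ``$\lim_k(\ldots)=\tfrac{a(p)^{m/2}}{c(p)^{(m-2)/2}}J_\infty(\widehat v)$'' would require strong convergence of the rescaled sequence, which you do not have; only the Fatou inequality is available, and that is all the proposition claims. The full Struwe splitting you mention at the end is therefore unnecessary here.
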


\begin{proof}
Fix $\delta$ such that $3\delta\in(0,i_{g}),$ where $i_{g}$ is the injectivity
radius of $M.$ As $M$ is compact, there is a constant $C_{1}>1$ such that, for
every $q\in M,$ $\varrho\in(0,3\delta],$ $\varphi\in\mathcal{C}^{\infty}(M)$
and $s\in\lbrack1,\infty),$%
\begin{align}
C_{1}^{-1}\int_{B(0,\varrho)}\left\vert \widetilde{\varphi}\right\vert ^{s}dx
&  \leq\int_{B_{g}(q,\varrho)}\left\vert \varphi\right\vert ^{s}dV_{g}\leq
C_{1}\int_{B(0,\varrho)}\left\vert \widetilde{\varphi}\right\vert
^{s}dx,\label{eq:transformation1}\\
C_{1}^{-1}\int_{B(0,\varrho)}\left\vert \nabla\tilde{\varphi}\right\vert
^{2}dx  &  \leq\int_{B_{g}(q,\varrho)}\left\vert \nabla_{g}\varphi\right\vert
_{g}^{2}\,dV_{g}\leq C_{1}\int_{B(0,\varrho)}\left\vert \nabla\tilde{\varphi
}\right\vert ^{2}dx, \label{eq:transformation2}%
\end{align}
where $\tilde{\varphi}:=\varphi\circ\exp_{q}$ is written in normal coordinates
around $q$ and $\left\vert \cdot\right\vert $ is the standard Euclidean metric.

By Lemma \ref{lem:PSbdd} we have that
\[
|u_{k}|_{g,c,2^{\ast}}^{2^{\ast}}=m\left(  J_{g}(u_{k})-\frac{1}{2}%
J_{g}^{\prime}(u_{k})u_{k}\right)  \rightarrow m\tau=:\beta>0.
\]
So, since $M$ is compact, after passing to a subsequence, there exist
$q_{0}\in M$ and $\lambda_{0}\in(0,\beta)$ such that
\[
\int_{B_{g}(q_{0},\delta)}c|u_{k}|^{2^{\ast}}dV_{g}\geq\lambda_{0}%
\qquad\forall k\in\mathbb{N},
\]
where $B_{g}(q,r)$ denotes the ball in $(M,g)$ with center $q$ and radius $r.$
For each $k$, the Levy concentration function $Q_{k}:[0,\infty)\rightarrow
\lbrack0,\infty)$ given by
\[
Q_{k}(r):=\max_{q\in M}\int_{B_{g}(q,r)}c|u_{k}|^{2^{\ast}}dV_{g}%
\]
is continuous, nondecreasing, and satisfies $Q_{k}(0)=0$ and $Q_{k}%
(\delta)\geq\lambda_{0}.$ We fix $\lambda\in(0,\lambda_{0})$ such that%
\begin{equation}
\lambda<C_{1}^{-m-1}(\min_{M}c)\left[  \frac{1}{2}S(\min_{M}a)(\max_{M}%
c)^{-1}\right]  ^{m/2}. \label{eq:lambda}%
\end{equation}
Then, for each $k\in\mathbb{N}$, there exist $p_{k}\in M$ and $r_{k}%
\in(0,\delta]$ such that
\begin{equation}
Q_{k}(r_{k})=\int_{B_{g}(p_{k},r_{k})}c|u_{k}|^{2^{\ast}}dV_{g}=\lambda
\label{eq:concentration}%
\end{equation}
and, after passing to a subsequence, $p_{k}\rightarrow p$ in $M.$

Fix a cut-off function $\zeta\in\mathcal{C}_{c}^{\infty}(\mathbb{R}^{m})$ such
that $0\leq\zeta\leq1,$ $\zeta(y)=1$ if $\left\vert y\right\vert \leq2\delta$
and $\zeta(y)=0$ if $\left\vert y\right\vert \geq3\delta$ and, for each $k,$
define
\[
v_{k}(x):=r_{k}^{(m-2)/2}(u_{k}\circ\exp_{p_{k}})(r_{k}x),\text{\qquad}%
\zeta_{k}(x):=\zeta(r_{k}x),
\]%
\[
a_{k}(x):=(a\circ\exp_{p_{k}})(r_{k}x)\text{\qquad and\qquad}c_{k}%
(x):=(c\circ\exp_{p_{k}})(r_{k}x).
\]
Then, supp$(\zeta_{k}v_{k})\subset\overline{B(0,3\delta r_{k}^{-1})}$ and,
extending $\zeta_{k}v_{k}$ by $0$ outside $B(0,3\delta r_{k}^{-1}),$ we have
that $\zeta_{k}v_{k}\in\mathcal{C}_{c}^{\infty}(\mathbb{R}^{m})\subset
D^{1,2}(\mathbb{R}^{m}).$ As $\zeta\equiv1$ in $B(0,r_{k}),$ using
(\ref{eq:concentration}) and (\ref{eq:transformation1}) and performing the
change of variable $y=r_{k}x$ we obtain%
\begin{align}
0  &  <\lambda=\int_{B_{g}(p_{k},r_{k})}c|u_{k}|^{2^{\ast}}dV_{g}\leq
C_{1}\int_{B(0,r_{k})}(c\circ\exp_{p_{k}})|\zeta(u_{k}\circ\exp_{p_{k}%
})|^{2^{\ast}}dy\label{eq:nontriviality}\\
&  =C_{1}\int_{B(0,1)}c_{k}|\zeta_{k}v_{k}|^{2^{\ast}}dx\leq C\int
_{B(0,1)}|\zeta_{k}v_{k}|^{2^{\ast}}dx.\nonumber
\end{align}
Here and hereafter $C$ stands for a positive constant, not necessarily the
same one. Moreover, inequalities (\ref{eq:transformation1}) and
(\ref{eq:transformation2}) yield%
\begin{align*}
&  \int_{B(0,3\delta r_{k}^{-1})}\left\vert \nabla\left(  \zeta_{k}%
v_{k}\right)  \right\vert ^{2}\,dx=\int_{B(0,3\delta)}\left\vert \nabla
(\zeta(u_{k}\circ\exp_{p_{k}}))\right\vert ^{2}\,dy\\
&  \qquad\leq C\int_{B(0,3\delta)}\left[  \zeta^{2}\left\vert \nabla
(u_{k}\circ\exp_{p_{k}})\right\vert ^{2}+\left\vert \nabla\zeta\right\vert
^{2}(u_{k}\circ\exp_{p_{k}})^{2}\right]  \,dy\\
&  \qquad\leq C\int_{B(0,3\delta)}\left[  \left\vert \nabla(u_{k}\circ
\exp_{p_{k}})\right\vert ^{2}+(u_{k}\circ\exp_{p_{k}})^{2}\right]  \,dy\\
&  \qquad\leq C\int_{B_{g}(p_{k},3\delta)}\left[  \left\vert \nabla_{g}%
u_{k}\right\vert _{g}^{2}+u_{k}^{2}\right]  \,dV_{g},
\end{align*}
so Lemma \ref{lem:PSbdd} implies that $\left(  \zeta_{k}v_{k}\right)  $ is
bounded in $D^{1,2}(\mathbb{R}^{m}).$ Therefore, after passing to a
subsequence, we have that $\zeta_{k}v_{k}\rightharpoonup v$ weakly in
$D^{1,2}(\mathbb{R}^{m}),$ $\zeta_{k}v_{k}\rightarrow v$ in $L_{loc}%
^{2}(\mathbb{R}^{m})$ and $\zeta_{k}v_{k}\rightarrow v$ a.e. in $\mathbb{R}%
^{m}.$ The proof of the proposition will follow from the next three claims.

\textsc{Claim 1. }$v\neq0.$

To prove this claim first note that, as $M$ is compact, there exists $C_{2}>1$
such that, for every $q\in M,$
\begin{equation}
C_{2}^{-1}\left\vert y-z\right\vert \leq d_{g}(\exp_{q}\left(  y\right)
,\exp_{q}\left(  z\right)  )\leq C_{2}\left\vert y-z\right\vert \text{\qquad
}\forall y,z\in B(0,2\delta), \label{eq:transformation3}%
\end{equation}
where $d_{g}$ is the distance in $M.$ Set $\varrho:=C_{2}^{-1}.$ Then, for
every $z\in\overline{B(0,1)}$ we have that%
\[
\exp_{p_{k}}B(r_{k}z,\,r_{k}\varrho)\subset B_{g}(\exp_{p_{k}}(r_{k}%
z),\,r_{k}).
\]
Now, arguing by contradiction, assume that $v=0.$ Let $\vartheta\in
\mathcal{C}_{c}^{\infty}(\mathbb{R}^{m})$ be such that supp$(\vartheta)\subset
B(z,\varrho)$ for some $z\in\overline{B(0,1)}.$ Then, supp$(\vartheta)\subset
B(0,2).$ Set $\hat{\vartheta}_{k}(q):=\vartheta(r_{k}^{-1}\exp_{p_{k}}%
^{-1}(q)).$ As $\zeta_{k}\equiv1$ in $B(0,2),$ $\zeta_{k}v_{k}\rightarrow0$ in
$L_{loc}^{2}(\mathbb{R}^{m}),$ $J_{g}^{\prime}(u_{k})\rightarrow0$ in $\left(
H_{g}^{1}(M)\right)  ^{\prime}$ and $(\hat{\vartheta}_{k}^{2}u_{k})$ is
bounded in $H_{g}^{1}(M),$ using inequalities (\ref{eq:transformation1}) and
(\ref{eq:transformation2}) and H\"{o}lder's and Sobolev's inequalities, we
obtain%
\begin{align*}
&  \int_{\mathbb{R}^{m}}\left\vert \nabla\left(  \vartheta\zeta_{k}%
v_{k}\right)  \right\vert ^{2}dx=\int_{B(0,2)}\left\vert \nabla\left(
\vartheta v_{k}\right)  \right\vert ^{2}dx=\int_{B(0,2r_{k})}\left\vert
\nabla\left(  (\hat{\vartheta}_{k}u_{k})\circ\exp_{p_{k}}\right)  \right\vert
^{2}dy\\
&  \leq C_{3}\int_{B_{g}(p_{k},2r_{k})}a\left\vert \nabla_{g}(\hat{\vartheta
}_{k}u_{k})\right\vert _{g}^{2}dV_{g}\\
&  =C_{3}\int_{B_{g}(p_{k},2r_{k})}a\left[  \hat{\vartheta}_{k}^{2}\left\vert
\nabla_{g}\left(  u_{k}\right)  \right\vert _{g}^{2}+2\hat{\vartheta}_{k}%
u_{k}\left\langle \nabla_{g}u_{k},\nabla_{g}\hat{\vartheta}_{k}\right\rangle
_{g}+\left\vert \nabla_{g}\hat{\vartheta}_{k}\right\vert _{g}^{2}u_{k}%
^{2}\right]  dV_{g}\\
&  =C_{3}\int_{B_{g}(p_{k},2r_{k})}a\left\langle \nabla_{g}u_{k},\nabla
_{g}(\hat{\vartheta}_{k}^{2}u_{k})\right\rangle _{g}\,dV_{g}+o(1)\\
&  =C_{3}\int_{B_{g}(p_{k},2r_{k})}c\left\vert u_{k}\right\vert ^{2^{\ast}%
-2}(\hat{\vartheta}_{k}u_{k})^{2}\,dV_{g}+o(1)\\
&  \leq C_{4}\int_{B(0,2)\cap B(z,\rho)}\left\vert v_{k}\right\vert ^{2^{\ast
}-2}(\vartheta v_{k})^{2}\,dx+o(1)\\
&  \leq C_{4}\left(  \int_{B(z,\rho)}\left\vert v_{k}\right\vert ^{2^{\ast}%
}dx\right)  ^{2/m}\left(  \int_{B(0,2)}\left\vert \vartheta\zeta_{k}%
v_{k}\right\vert ^{2^{\ast}}dx\right)  ^{2/2^{\ast}}+o(1)\\
&  \leq C_{4}S^{-1}\left(  \int_{B(z,\rho)}\left\vert v_{k}\right\vert
^{2^{\ast}}dx\right)  ^{2/m}\int_{\mathbb{R}^{m}}\left\vert \nabla\left(
\vartheta\zeta_{k}v_{k}\right)  \right\vert ^{2}\,dx+o(1),
\end{align*}
where $C_{3}:=C_{1}(\min_{M}a)^{-1}$ and $C_{4}:=C_{1}(\max_{M}c)C_{3}.$ On
the other hand, from (\ref{eq:transformation1}), (\ref{eq:transformation3})
and (\ref{eq:concentration}) we derive%
\begin{align*}
\int_{B(z,\rho)}\left\vert v_{k}\right\vert ^{2^{\ast}}dx  &  \leq C_{1}%
(\min_{M}c)^{-1}\int_{B_{g}(\exp_{p_{k}}(r_{k}z),\,r_{k})}c\left\vert
u_{k}\right\vert ^{2^{\ast}}\,dV_{g}\\
&  \leq C_{1}(\min_{M}c)^{-1}\lambda.
\end{align*}
It follows from (\ref{eq:lambda}) that $(C_{1}(\min_{M}c)^{-1}\lambda
)^{2/m}<\frac{1}{2}C_{4}^{-1}S.$ Therefore,%
\[
\lim_{k\rightarrow\infty}\int_{\mathbb{R}^{m}}\left\vert \nabla\left(
\vartheta\zeta_{k}v_{k}\right)  \right\vert ^{2}\,dx=0
\]
and Sobolev's inequality yields
\[
\lim_{k\rightarrow\infty}\int_{\mathbb{R}^{m}}\left\vert \vartheta\zeta
_{k}v_{k}\right\vert ^{2^{\ast}}dx=0
\]
for every $\vartheta\in\mathcal{C}_{c}^{\infty}(\mathbb{R}^{m})$ such that
supp$(\vartheta)\subset B(z,\varrho)$ for some $z\in\overline{B(0,1)}$. As
$B(0,1)$ can be covered by a finite number of balls $B(z_{j},\varrho)$ with
$z_{j}\in\overline{B(0,1)},$ choosing a partition of unity $\{\vartheta
_{j}^{2^{\ast}}\}$ subordinated to this covering, we conclude that%
\[
\int_{B(0,1)}\left\vert \zeta_{k}v_{k}\right\vert ^{2^{\ast}}dx\leq%
%TCIMACRO{\tsum \limits_{j}}%
%BeginExpansion
{\textstyle\sum\limits_{j}}
%EndExpansion
\int_{\mathbb{R}^{m}}\left\vert \vartheta_{j}\zeta_{k}v_{k}\right\vert
^{2^{\ast}}dx\longrightarrow0,
\]
contradicting (\ref{eq:nontriviality}). This finishes the proof of Claim 1.

\textsc{Claim 2. }$\widehat{v}:=\left(  \frac{c(p)}{a(p)}\right)  ^{(m-2)/4}v$
\ is a nontrivial solution to problem (\ref{Problem : limit}).

First we show that, after passing to a subsequence, $r_{k}\rightarrow0$.
Arguing by contradiction, assume that $r_{k}>\theta>0$ for all $k$ large
enough. Then, as $\zeta_{k}v_{k}\rightarrow v$ a.e. in $\mathbb{R}^{m}$,
supp$(\zeta_{k}v_{k})\subset\overline{B(0,3\delta r_{k}^{-1})}$, $v\neq0$ and
$\zeta_{k}v_{k}\rightarrow v$ in $L_{loc}^{2}(\mathbb{R}^{m}),$ using
inequality (\ref{eq:transformation1}) we obtain%
\begin{align*}
0  &  \neq\int_{B(0,3\delta\theta^{-1})}\left\vert v\right\vert ^{2}%
dx=\int_{B(0,3\delta\theta^{-1})}\left\vert \zeta_{k}v_{k}\right\vert
^{2}dx+o(1)\\
&  =r_{k}^{-2}\int_{B(0,3\delta)}\left\vert \zeta(u_{k}\circ\exp_{p_{k}%
})\right\vert ^{2}dy+o(1)\\
&  \leq C_{1}\theta^{-2}\int_{M}|u_{k}|^{2}dV_{g}.
\end{align*}
This yields a contradiction because, as we are assuming that $u_{k}%
\rightharpoonup0$ weakly in $H_{g}^{1}(M)$, we have that $u_{k}\rightarrow0$
strongly in $L_{g}^{2}(M).$

Claim 2 is equivalent to showing that $v$ satisfies%
\[
-a(p)\Delta v=c(p)|v|^{2^{\ast}-2}v,\qquad v\in D^{1,2}(\mathbb{R}^{m}),
\]
i.e. we need to show that
\begin{equation}
\int_{\mathbb{R}^{m}}a(p)\left\langle \nabla v,\nabla\varphi\right\rangle
\,dx=\int_{\mathbb{R}^{m}}c(p)|v|^{2^{\ast}-2}v\varphi\,dx\qquad\forall
\varphi\in C_{c}^{\infty}(\mathbb{R}^{m}). \label{eq:claim2}%
\end{equation}
To this end, take $\varphi\in C_{c}^{\infty}(\mathbb{R}^{m})$ and let $R>0$ be
such that supp$(\varphi)\subset B(0,R)$. For $k$ such that $Rr_{k}<3\delta$
define $\hat{\varphi}_{k}\in H_{g}^{1}(M)$ by
\[
\hat{\varphi}_{k}(q):=r_{k}^{\frac{2-m}{2}}\varphi(r_{k}^{-1}\exp_{p_{k}}%
^{-1}(q)).
\]
Note first that, as $a_{k}\rightarrow a(p)$ and $c_{k}\rightarrow c(p)$ in
$L_{loc}^{\infty}(\mathbb{R}^{m})$ and $\zeta_{k}v_{k}\rightharpoonup v$
weakly in $D^{1,2}(\mathbb{R}^{m})$ we have that
\begin{align*}
\int_{\mathbb{R}^{m}}a_{k}\left\langle \nabla\left(  \zeta_{k}v_{k}\right)
,\nabla\varphi\right\rangle \,dx  &  =\int_{\mathbb{R}^{m}}a(p)\left\langle
\nabla v,\nabla\varphi\right\rangle \,dx+o(1),\\
\int_{\mathbb{R}^{m}}c_{k}|\zeta_{k}v_{k}|^{2^{\ast}-2}\left(  \zeta_{k}%
v_{k}\right)  \varphi\,dx  &  =\int_{\mathbb{R}^{m}}c(p)|v|^{2^{\ast}%
-2}v\varphi\,dx+o(1).
\end{align*}
Next observe that, if $(g_{ij}^{k})$ is the metric $g$ written in normal
coordinates around $p_{k}$, $(g_{k}^{ji})$ is its inverse, $\left\vert
g^{k}\right\vert :=\det(g_{ij}^{k})$ and $(\partial^{ji})$ is the identity
matrix then, for every $i,j=1,...,m,$%
\begin{equation}
\lim_{\left\vert y\right\vert \rightarrow0}g_{k}^{ji}(y)=\partial
^{ji}\text{\quad and\quad}\lim_{\left\vert y\right\vert \rightarrow
0}\left\vert g^{k}\right\vert ^{1/2}(y)=1, \label{eq:normal}%
\end{equation}
uniformly in $k$. Therefore, as supp$(\hat{\varphi}_{k}\circ\exp_{p_{k}%
})\subset B(0,Rr_{k}),$ $r_{k}\rightarrow0,$ and $(u_{k}\circ\exp_{p_{k}})$
and $(\hat{\varphi}_{k}\circ\exp_{p_{k}})$ are bounded in $D^{1,2}%
(\mathbb{R}^{m}),$ we have that
\begin{align*}
&  \int_{\mathbb{R}^{m}}(a\circ\exp_{p_{k}})\left\langle \nabla(u_{k}\circ
\exp_{p_{k}}),\nabla(\hat{\varphi}_{k}\circ\exp_{p_{k}})\right\rangle
dy-\int_{M}a\left\langle \nabla_{g}u_{k},\nabla_{g}\hat{\varphi}%
_{k}\right\rangle _{g}dV_{g}\\
&  =\sum_{i,j}\int_{B(0,Rr_{k})}(a\circ\exp_{p_{k}})(\partial^{ji}-\left\vert
g^{k}\right\vert ^{1/2}g_{k}^{ji})\,\partial_{i}(u_{k}\circ\exp_{p_{k}%
})\,\partial_{j}(\hat{\varphi}_{k}\circ\exp_{p_{k}})\,dy\\
&  =o(1),
\end{align*}
and%
\begin{align*}
&  \int_{\mathbb{R}^{m}}(c\circ\exp_{p_{k}})|u_{k}\circ\exp_{p_{k}}|^{2^{\ast
}-2}(u_{k}\circ\exp_{p_{k}})(\hat{\varphi}_{k}\circ\exp_{p_{k}})dy-\int
_{M}c\left\vert u_{k}\right\vert ^{2^{\ast}-2}u_{k}\hat{\varphi}_{k}dV_{g}\\
&  =\int_{B(0,Rr_{k})}(c\circ\exp_{p_{k}})|u_{k}\circ\exp_{p_{k}}|^{2^{\ast
}-2}(u_{k}\circ\exp_{p_{k}})(\hat{\varphi}_{k}\circ\exp_{p_{k}})(1-\left\vert
g^{k}\right\vert ^{1/2})\,dy\\
&  =o(1).
\end{align*}
Finally, as $J_{g}^{\prime}(u_{k})\rightarrow0$ in $\left(  H_{g}%
^{1}(M)\right)  ^{\prime}$ and $(\hat{\varphi}_{k})$ is bounded in $H_{g}%
^{1}(M)$ we conclude that, for $k$ large enough,%
\begin{align*}
&  \int_{\mathbb{R}^{m}}a(p)\left\langle \nabla v,\nabla\varphi\right\rangle
\,dx\\
&  \qquad=\int_{\mathbb{R}^{m}}a_{k}\left\langle \nabla\left(  \zeta_{k}%
v_{k}\right)  ,\nabla\varphi\right\rangle \,dx+o(1)\\
&  \qquad=\int_{\mathbb{R}^{m}}(a\circ\exp_{p_{k}})\left\langle \nabla
(u_{k}\circ\exp_{p_{k}}),\nabla(\hat{\varphi}_{k}\circ\exp_{p_{k}%
})\right\rangle \,dy+o(1)\\
&  \qquad=\int_{M}a\left\langle \nabla_{g}u_{k},\nabla_{g}\hat{\varphi}%
_{k}\right\rangle _{g}dV_{g}+o(1)\\
&  \qquad=\int_{M}c\left\vert u_{k}\right\vert ^{2^{\ast}-2}u_{k}\hat{\varphi
}_{k}\,dV_{g}+o(1)\\
&  \qquad=\int_{\mathbb{R}^{m}}(c\circ\exp_{p_{k}})|u_{k}\circ\exp_{p_{k}%
}|^{2^{\ast}-2}(u_{k}\circ\exp_{p_{k}})(\hat{\varphi}_{k}\circ\exp_{p_{k}%
})\,dy+o(1)\\
&  \qquad=\int_{\mathbb{R}^{m}}c_{k}|\zeta_{k}v_{k}|^{2^{\ast}-2}\left(
\zeta_{k}v_{k}\right)  \varphi\,dx+o(1)\\
&  \qquad=\int_{\mathbb{R}^{m}}c(p)|v|^{2^{\ast}-2}v\varphi\,dx+o(1).
\end{align*}
This proves (\ref{eq:claim2}).

\textsc{Claim 3. }$\#\Gamma p<\infty$ and $\tau\geq\left(  \frac
{a(p)^{m/2}\,\#\Gamma p}{c(p)^{\left(  m-2\right)  /2}}\right)  J_{\infty
}(\widehat{v}).$

Let $\gamma_{1}p,...,\gamma_{n}p$ be $n$ distinct points in the $\Gamma$-orbit
$\Gamma p$ of $p,$ and fix $\eta\in(0,\delta]$ such that $d_{g}(\gamma
_{i}p,\gamma_{j}p)\geq4\eta$ if $i\neq j.$ For $k$ sufficiently large,
$d_{g}(p_{k},p)<\eta$ so, as $\gamma_{i}$ is an isometry, we have that
$d_{g}(\gamma_{i}p_{k},\gamma_{j}p_{k})>2\eta$ for all $k\in\mathbb{N}$ and
$i\neq j.$ Since $c$ and $u_{k}$ are $\Gamma$-invariant, for each $\rho
\in(0,\eta]$ we obtain that
\begin{equation}
n\int_{B_{g}(p_{k},\rho)}c\left\vert u_{k}\right\vert ^{2^{\ast}}dV_{g}%
=\sum_{i=1}^{n}\int_{B_{g}(\gamma_{i}p_{k},\rho)}c\left\vert u_{k}\right\vert
^{2^{\ast}}dV_{g}\leq\int_{M}c\left\vert u_{k}\right\vert ^{2^{\ast}}dV_{g}.
\label{eq:disjoint}%
\end{equation}
Let $\varepsilon>0.$ By (\ref{eq:normal}) there exists $\rho\in(0,\eta]$ such
that $(1+\varepsilon)^{-1}<\left\vert g^{k}\right\vert ^{1/2}<(1+\varepsilon)$
in $B(0,\rho)$ for $k$ large enough$.$ As $1_{B(0,\rho r_{k}^{-1})}%
c_{k}\rightarrow c(p)$ and $\zeta_{k}v_{k}\rightarrow v$ a.e. in
$\mathbb{R}^{m}$, Fatou's lemma and inequality (\ref{eq:disjoint}) yield%
\begin{align*}
\frac{n}{m}\int_{\mathbb{R}^{m}}c(p)\left\vert v\right\vert ^{2^{\ast}}dx  &
\leq\liminf_{k\rightarrow\infty}\frac{n}{m}\int_{B(0,\rho r_{k}^{-1})}%
c_{k}\left\vert \zeta_{k}v_{k}\right\vert ^{2^{\ast}}dx\\
&  \leq\liminf_{k\rightarrow\infty}\frac{n}{m}\int_{B(0,\rho)}(c\circ
\exp_{p_{k}})\left\vert u_{k}\circ\exp_{p_{k}}\right\vert ^{2^{\ast}}dy\\
&  \leq(1+\varepsilon)\liminf_{k\rightarrow\infty}\frac{n}{m}\int_{B_{g}%
(p_{k},\rho)}c\left\vert u_{k}\right\vert ^{2^{\ast}}dV_{g}\\
&  \leq(1+\varepsilon)\lim_{k\rightarrow\infty}\frac{1}{m}\int_{M}c\left\vert
u_{k}\right\vert ^{2^{\ast}}dV_{g}=(1+\varepsilon)\tau.
\end{align*}
This implies that $n$ is bounded and, therefore, $\#\Gamma p<\infty.$
Moreover, as $\varepsilon$ is arbitrary, taking $n=\#\Gamma p$, we conclude
that%
\begin{align*}
\left(  \frac{a(p)^{m/2}\,\#\Gamma p}{c(p)^{\left(  m-2\right)  /2}}\right)
J_{\infty}(\widehat{v})  &  =\left(  \frac{a(p)^{m/2}\,\#\Gamma p}%
{c(p)^{\left(  m-2\right)  /2}}\right)  \frac{1}{m}\int_{\mathbb{R}^{m}%
}\left\vert \widehat{v}\right\vert ^{2^{\ast}}dx\\
&  =\frac{\#\Gamma p}{m}\int_{\mathbb{R}^{m}}c(p)\left\vert v\right\vert
^{2^{\ast}}dx\leq\tau,
\end{align*}
as claimed.

This finishes the proof of the proposition.
\end{proof}

\begin{proof}
[Proof of Theorem \ref{Theorem : Compactness}]Let $(u_{k})$ be a sequence in
$H_{g}^{1}(M)^{\Gamma}$ such that $J_{g}(u_{k})\rightarrow\tau<(\min_{q\in
M}\frac{a(q)^{m/2}\,\#\Gamma q}{c(q)^{(m-2)/2}})\frac{1}{m}S^{m/2}$ and
$J_{g}^{\prime}(u_{k})\rightarrow0$ in $\left(  H_{g}^{1}(M)\right)  ^{\prime
}.$ By Lemma \ref{lem:PSbdd},\ $(u_{k})$ is bounded in $H_{g}^{1}(M)$ so,
after passing to a subsequence, $u_{k}\rightharpoonup u$ weakly in $H_{g}%
^{1}(M).$ It follows that $u\in H_{g}^{1}(M)^{\Gamma},$ $J_{g}^{\prime}(u)=0$
and, as $-\,$div$_{g}(a\nabla_{g})+b$ is coercive on $H_{g}^{1}(M)^{\Gamma},$
\begin{equation}
J_{g}(u)=\frac{1}{m}\Vert u\Vert_{g,a,b}^{2}\leq\liminf_{k\rightarrow\infty
}\frac{1}{m}\Vert u_{k}\Vert_{g,a,b}^{2}=\lim_{k\rightarrow\infty}J_{g}%
(u_{k})=\tau. \label{eq:weakly}%
\end{equation}
Set $\widetilde{u}_{k}:=u_{k}-u.$ Then $\widetilde{u}_{k}\rightharpoonup0$
weakly in $H_{g}^{1}(M)$ and, by a standard argument (see, e.g., \cite{cf2,
w}), $(\widetilde{u}_{k})$ is a $\Gamma$-invariant Palais-Smale sequence for
the functional $J_{g}$ with $b=0$ at the level $\widetilde{\tau}:=\tau
-J_{g}(u)<(\min_{q\in M}\frac{a(q)^{m/2}\,\#\Gamma q}{c(q)^{(m-2)/2}})\frac
{1}{m}S^{m/2}.$ Proposition \ref{Prop : Compactness}\ implies that
$\widetilde{\tau}=0.$ Thus, inequality (\ref{eq:weakly}) is an equality. It
follows that $u_{k}\rightarrow u$ strongly in $H_{g}^{1}(M).$
\end{proof}

\section{A variational principle for nodal solutions}

\label{sec:vp}This section is devoted to the proof of Theorem
\ref{thm: Variational principle}.

We begin by showing that a neighborhood of the set of functions in
$H^{1}_g(M)^{\Gamma}$ which do not change sign is invariant under the negative
gradient flow of $J_{g},$ with respect to a suitably chosen scalar product in
$H_g^{1}(M)^{\Gamma}$.

Since we are assuming that $a>0$ and the operator $-$div$_{g}(a\nabla_{g})+b$
is coercive on $H_g^{1}(M)^{\Gamma},$ there exists $\mu>0$ such that%
\begin{equation}
\int_{M}\left[  a|\nabla_{g}u|_{g}^{2}+b|u|^{2}\right]  dV_{g}\geq\mu\int
_{M}\left[  a|\nabla_{g}u|_{g}^{2}+|u|^{2}\right]  dV_{g}\text{\qquad}\forall
u\in H_{g}^{1}(M)^{\Gamma}. \label{eq:coercivity}%
\end{equation}
Fix $A>\max\{1,\mu,|b|_{\mathcal{C}^{0}(M)}\}$ and consider the scalar
product
\begin{equation}
\left\langle u,v\right\rangle _{g,a,A}:=\int_{M}\left[  a\langle\nabla
_{g}u,\nabla_{g}v\rangle_{g}+Auv\right]  dV_{g} \label{eq:scalar_product}%
\end{equation}
in $H_{g}^{1}(M)^{\Gamma}$. We write $\left\Vert \cdot\right\Vert _{g,a,A}$
for the induced norm, which is equivalent to the standard norm in $H_{g}%
^{1}(M)^{\Gamma}$. Given a subset $\mathcal{D}$ of $H_{g}^{1}(M)^{\Gamma}$ and
$\rho>0,$ we set%
\[
B_{\rho}(\mathcal{D}):=\{u\in H_{g}^{1}(M)^{\Gamma}:\text{dist}_{A}%
(u,\mathcal{D})\leq\rho\},
\]
where dist$_{A}(u,\mathcal{D}):=\inf_{v\in\mathcal{D}}\left\Vert
u-v\right\Vert _{g,a,A}.$

The gradient of the functional $J_{g}:H_{g}^{1}(M)^{\Gamma}\rightarrow
\mathbb{R}$ at $u\in H_{g}^{1}(M)^{\Gamma},$ with respect to the scalar
product (\ref{eq:scalar_product}), is the vector $\nabla J_{g}(u)$ which
satisfies%
\begin{align*}
&  \left\langle \nabla J_{g}(u),v\right\rangle _{g,a,A}=J_{g}^{\prime}(u)v\\
&  \qquad=\left\langle u,v\right\rangle _{g,a,A}-\int_{M}(A-b)uv\,dV_{g}%
-\int_{M}c\left\vert u\right\vert ^{2^{\ast}-2}uv\,dV_{g}\text{\qquad}\forall
v\in H_{g}^{1}(M)^{\Gamma},
\end{align*}
i.e., $\nabla J_{g}(u)=u-Lu-Gu$ where $Lu,$ $Gu\in H_{g}^{1}(M)^{\Gamma}$ are
the unique solutions to%
\begin{align}
-\text{div}_{g}(a\nabla_{g}(Lu))+A\left(  Lu\right)   &  =(A-b)u,
\label{eq:Lu}\\
-\text{div}_{g}(a\nabla_{g}(Gu))+A\left(  Gu\right)   &  =c\left\vert
u\right\vert ^{2^{\ast}-2}u. \label{eq:Gu}%
\end{align}
Then, the following inequality holds true. Its proof was suggested by
J\'{e}r\^{o}me V\'{e}tois and fills in a small gap in his proof of Lemma 2.1
in \cite{v}.

\begin{lemma}
\label{lem:vetois}Set $\overline{\mu}:=\frac{A-\mu}{A+\mu}\in(0,1).$ Then, for
every $u\in H_{g}^{1}(M)^{\Gamma},$ we have%
\[
\left\Vert Lu\right\Vert _{g,a,A}\leq\overline{\mu}\left\Vert u\right\Vert
_{g,a,A}.
\]

\end{lemma}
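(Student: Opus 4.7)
The plan is to test the defining equation for $Lu$ against $Lu$ itself, apply Cauchy--Schwarz with the positive weight $A-b$, and then use the coercivity hypothesis \eqref{eq:coercivity} to absorb everything into the $\|\cdot\|_{g,a,A}$-norm.

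First, by the weak formulation of \eqref{eq:Lu}, one has $\langle Lu,v\rangle_{g,a,A}=\int_M(A-b)uv\,dV_g$ for every $v\in H_g^{1}(M)^{\Gamma}$. Taking $v=Lu$ yields the key identity
\[
\|Lu\|_{g,a,A}^{2}=\int_M(A-b)\,u\,Lu\,dV_g.
\]
Since $A>|b|_{\mathcal{C}^0(M)}$, the weight $A-b$ is strictly positive on $M$, so Cauchy--Schwarz in $L^2(M,(A-b)\,dV_g)$ gives
\[
\|Lu\|_{g,a,A}^{2}\leq\Bigl(\int_M(A-b)u^{2}\,dV_g\Bigr)^{1/2}\Bigl(\int_M(A-b)(Lu)^{2}\,dV_g\Bigr)^{1/2}.
\]

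The task therefore reduces to proving the auxiliary norm comparison
\[
\int_M(A-b)v^{2}\,dV_g\leq\bar\mu^{\,2}\,\|v\|_{g,a,A}^{2}\qquad\forall v\in H_g^{1}(M)^{\Gamma},
\]
after which the lemma follows at once by cancelling a factor of $\|Lu\|_{g,a,A}$. Rewriting
\[
\int_M(A-b)v^{2}\,dV_g=\|v\|_{g,a,A}^{2}-\langle v,v\rangle_{g,a,b},
\]
this is equivalent to the lower bound $\langle v,v\rangle_{g,a,b}\geq(1-\bar\mu^{\,2})\|v\|_{g,a,A}^{2}$, which I would derive by combining the coercivity inequality \eqref{eq:coercivity} with the elementary estimate $\int_M v^{2}\,dV_g\leq A^{-1}\|v\|_{g,a,A}^{2}$ (valid because $\int_M a|\nabla_g v|_g^{2}\,dV_g\geq 0$). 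Together these give $\int_M[a|\nabla_g v|_g^{2}+v^{2}]\,dV_g\geq A^{-1}\|v\|_{g,a,A}^{2}$, and hence the desired comparison with a constant depending only on $\mu$ and $A$.

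The main obstacle is the bookkeeping of the various weights ($1$, $A$, and $b$) so that the resulting constant comes out to precisely the M\"obius--type ratio $\bar\mu=(A-\mu)/(A+\mu)$ rather than a cruder quantity; the choice $A>\max\{1,\mu,|b|_{\mathcal{C}^{0}(M)}\}$ is exactly what makes every coefficient have the sign needed to consolidate the leftover terms into a single factor, and the pointwise positivity of $A-b$ is what legitimizes the weighted Cauchy--Schwarz step above. Once the auxiliary weighted-$L^{2}$ inequality is in place, the lemma follows immediately.
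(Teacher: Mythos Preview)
Your overall strategy is the paper's: test \eqref{eq:Lu} against $Lu$, and control $\int_M(A-b)v^2\,dV_g$ via the coercivity hypothesis. The only difference is that you use Cauchy--Schwarz in $L^2((A-b)\,dV_g)$ where the paper uses the elementary inequality $2xy\le x^2+y^2$.

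There is, however, a real gap in the bookkeeping. The auxiliary inequality $\int_M(A-b)v^2\,dV_g\le\bar\mu^{\,2}\|v\|_{g,a,A}^2$ that you announce is \emph{not} what your argument produces, and it is in fact false in general (take $b\equiv\mu$ constant with $0<\mu<1$ and $v$ constant: the ratio equals $(A-\mu)/A$, which is strictly larger than $\bar\mu^{\,2}$). What your two ingredients actually yield is $\langle v,v\rangle_{g,a,b}\ge \mu A^{-1}\|v\|_{g,a,A}^2$, hence
\[
\int_M(A-b)v^2\,dV_g\le\frac{A-\mu}{A}\,\|v\|_{g,a,A}^2,
\]
which is precisely the bound the paper derives. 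Feeding this into your Cauchy--Schwarz step and cancelling gives $\|Lu\|_{g,a,A}\le\frac{A-\mu}{A}\,\|u\|_{g,a,A}$, with a constant strictly larger than $\bar\mu=\frac{A-\mu}{A+\mu}$.

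This does not hurt the application: $\frac{A-\mu}{A}\in(0,1)$, and the lemma is only used (in Lemma~\ref{lem:variational_principle}) through the fact that $L$ is a strict contraction. For the record, the paper's own computation ends with $\|Lu\|_{g,a,A}^{2}\le\bar\mu\,\|u\|_{g,a,A}^{2}$, i.e.\ the constant $\sqrt{\bar\mu}$ rather than $\bar\mu$; your constant $\frac{A-\mu}{A}$ lies strictly between $\bar\mu$ and $\sqrt{\bar\mu}$, so your route is actually a touch sharper than the paper's, though neither attains the constant in the statement.
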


\begin{proof}
By (\ref{eq:coercivity}), for every $u\in H_{g}^{1}(M)^{\Gamma}$ we have that%
\begin{align*}
\int_{M}(A-b)u^{2}dV_{g}  &  \leq\int_{M}Au^{2}dV_{g}-\mu\int_{M}\left[
a|\nabla_{g}u|_{g}^{2}+|u|^{2}\right]  dV_{g}+\int_{M}a\left\vert \nabla
_{g}u\right\vert _{g}^{2}dV_{g}\\
&  \leq\frac{A-\mu}{A}\int_{M}\left[  a|\nabla_{g}u|_{g}^{2}+A|u|^{2}\right]
dV_{g}=\frac{A-\mu}{A}\left\Vert u\right\Vert _{g,a,A}^{2}.
\end{align*}
Hence, using (\ref{eq:Lu}) we obtain
\begin{align*}
\left\Vert Lu\right\Vert _{g,a,A}^{2}  &  =\int_{M}(A-b)u(Lu)\,dV_{g}\leq
\frac{1}{2}\int_{M}(A-b)\left[  u^{2}+(Lu)^{2}\right]  \,dV_{g}\\
&  \leq\frac{A-\mu}{2A}\left(  \left\Vert u\right\Vert _{g,a,A}^{2}+\left\Vert
Lu\right\Vert _{g,a,A}^{2}\right)  .
\end{align*}
Consequently,%
\[
\frac{A+\mu}{2A}\left\Vert Lu\right\Vert _{g,a,A}^{2}\leq\frac{A-\mu}%
{2A}\left\Vert u\right\Vert _{g,a,A}^{2},
\]
as claimed.
\end{proof}

We consider the negative gradient flow $\psi:\mathcal{G}\rightarrow H_{g}%
^{1}(M)^{\Gamma}$ of $J_{g},$ defined by
\[
\frac{\partial}{\partial t}\psi(t,u)=-\nabla J_{g}(\psi(t,u)),\qquad
\psi(0,u)=u,
\]
where $\mathcal{G}:=\{(t,u):u\in H_{g}^{1}(M)^{\Gamma},$ $0\leq t<T(u)\}$ and
$T(u)$ is the maximal existence time for the trajectory $t\mapsto\psi(t,u).$ A
subset $\mathcal{D}$ of $H_{g}^{1}(M)^{\Gamma}$ is said to be strictly
positively invariant if
\[
\psi(t,u)\in\text{int}\mathcal{D}\text{\qquad for every }u\in\mathcal{D}\text{
and }t\in(0,T(u)).
\]

The set of functions in $H_g^{1}(M)^{\Gamma}$ which do not change sign is
$\mathcal{P}^{\Gamma}\cup-\mathcal{P}^{\Gamma}$, where $\mathcal{P}^{\Gamma
}:=\{u\in H_{g}^{1}(M)^{\Gamma}:u\geq0\}$ is the convex cone of nonnegative
functions. The nodal solutions to the problem
(\ref{Equation : generalized Yamabe PDE}) lie in the set%
\[
\mathcal{E}_{g}^{\Gamma}:=\{u\in\mathcal{N}_{g}^{\Gamma}:u^{+},u^{-}%
\in\mathcal{N}_{g}^{\Gamma}\},
\]
where $u^{+}:=\max\{0,u\},$ $u^{-}:=\min\{0,u\}$ and $\mathcal{N}_{g}^{\Gamma
}$ is the Nehari manifold defined in (\ref{eq:nehari}).

\begin{lemma}
\label{lem:variational_principle}There exists $\rho_{0}>0$ such that, for
every $\rho\in(0,\rho_{0}),$

\begin{enumerate}
\item[(a)] $\left[  B_{\rho}(\mathcal{P}^{\Gamma})\cup B_{\rho}(-\mathcal{P}%
^{\Gamma})\right]  \cap\mathcal{E}_{g}^{\Gamma}=\emptyset$, and

\item[(b)] $B_{\rho}(\mathcal{P}^{\Gamma})$ and $B_{\rho}(-\mathcal{P}%
^{\Gamma})$ are strictly positively invariant.
\end{enumerate}
\end{lemma}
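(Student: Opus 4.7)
The plan is to treat parts (a) and (b) separately, exploiting the decomposition $\nabla J_g(u)=u-Lu-Gu$ together with Lemma \ref{lem:vetois} and the Nehari structure of $\mathcal{E}_g^\Gamma$.

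For (a), the idea is to show that functions in $\mathcal{E}_g^\Gamma$ stay bounded away from both $\mathcal{P}^\Gamma$ and $-\mathcal{P}^\Gamma$. If $u\in\mathcal{E}_g^\Gamma$ then $u^{\pm}\in\mathcal{N}_g^\Gamma$, so $\|u^{\pm}\|_{g,a,b}^2=|u^{\pm}|_{g,c,2^{*}}^{2^{*}}$; combining coercivity of $-\mathrm{div}_g(a\nabla_g)+b$ with the Sobolev embedding and the boundedness of $c$ yields a constant $c_0>0$, independent of $u$, with $|u^{\pm}|_{g,2^{*}}\geq c_0$. On the other hand, for any $v\in\mathcal{P}^\Gamma$ and $x\in\{u<0\}$ one has $v(x)\geq 0>u(x)$, so $|(u-v)^-(x)|\geq|u^-(x)|$ pointwise; integrating and applying Sobolev gives
\[
|u^-|_{g,2^{*}}\leq|(u-v)^-|_{g,2^{*}}\leq C\|u-v\|_{g,a,A}.
\]
Taking the infimum over $v\in\mathcal{P}^\Gamma$ forces $\mathrm{dist}_A(u,\mathcal{P}^\Gamma)\geq c_0/C$; the bound for $-\mathcal{P}^\Gamma$ follows by applying the same argument to $-u$. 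Any $\rho_0<c_0/C$ then satisfies (a).

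For (b), the first observation is that $L$ and $G$ both preserve the cone: since $A>|b|_{\mathcal{C}^0(M)}$, the right-hand side of \eqref{eq:Lu} is nonnegative whenever $v\in\mathcal{P}^\Gamma$, so the maximum principle gives $Lv\in\mathcal{P}^\Gamma$; and \eqref{eq:Gu} gives $Gv\in\mathcal{P}^\Gamma$ trivially. Thus $Lv+Gv\in\mathcal{P}^\Gamma$ for every $v\in\mathcal{P}^\Gamma$. Taking $v=u^+$, the pointwise identity $|u|^{2^{*}-2}u-(u^+)^{2^{*}-1}=-|u^-|^{2^{*}-1}$ on $\{u<0\}$ (and zero elsewhere), Lemma \ref{lem:vetois} applied to $L(u^-)=L(u-u^+)$, and the resolvent bound $\|Tf\|_{g,a,A}\leq C'|f|_{g,(2^{*})'}$ for $T:=(-\mathrm{div}_g(a\nabla_g)+A)^{-1}$ combine to give
\[
\mathrm{dist}_A(Lu+Gu,\mathcal{P}^\Gamma)\leq\|L(u^-)\|_{g,a,A}+\|T(c|u^-|^{2^{*}-1})\|_{g,a,A}\leq\bar\mu\|u^-\|_{g,a,A}+C\|u^-\|_{g,a,A}^{2^{*}-1}.
\]
For $\|u^-\|_{g,a,A}\leq\rho$ small enough that $\bar\mu+C\rho^{2^{*}-2}<1$, the integral representation $\psi(t,u)=e^{-t}u+\int_0^t e^{-(t-s)}\bigl[L\psi(s,u)+G\psi(s,u)\bigr]\,ds$ of the negative gradient flow then makes both $\|\psi(t,u)^-\|_{g,a,A}$ and $\mathrm{dist}_A(\psi(t,u),\mathcal{P}^\Gamma)$ strictly decreasing in $t>0$, yielding strict positive invariance. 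The case $B_\rho(-\mathcal{P}^\Gamma)$ is symmetric.

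The main technical point — and the step I expect to require the most care — is bridging the naturally arising norm $\|u^-\|_{g,a,A}$ and the distance $\mathrm{dist}_A(u,\mathcal{P}^\Gamma)$ used in the statement. One always has $\mathrm{dist}_A(u,\mathcal{P}^\Gamma)\leq\|u^-\|_{g,a,A}$, but the reverse equivalence $\|u^-\|_{g,a,A}\leq K\,\mathrm{dist}_A(u,\mathcal{P}^\Gamma)$ (needed to transfer the contraction above to the statement in terms of $\mathrm{dist}_A$) amounts to verifying that $u^+$ is the orthogonal projection of $u$ onto $\mathcal{P}^\Gamma$ with respect to \eqref{eq:scalar_product}, equivalently $\langle u^-,w\rangle_{g,a,A}\leq 0$ for every $w\in\mathcal{P}^\Gamma$. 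The zero-order contribution $\int A u^- w\,dV_g$ is manifestly nonpositive, so the work is to control the gradient contribution $\int_{\{u<0\}}a\nabla u\cdot\nabla w\,dV_g$, which should be handled by approximating $w$ by smooth nonnegative functions and using integration by parts on the open set $\{u<0\}$.
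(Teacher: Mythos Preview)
Your argument for (a) is essentially the paper's argument and is correct.

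For (b), however, the route you chose has a genuine gap, and it is precisely the step you flag yourself at the end. The reverse inequality $\|u^{-}\|_{g,a,A}\leq K\,\mathrm{dist}_{A}(u,\mathcal{P}^{\Gamma})$ is \emph{false} in general: $u^{+}$ is the metric projection of $u$ onto the nonnegative cone in $L^{2}$, but \emph{not} in $H^{1}$-type norms. The $H^{1}$-projection onto $\mathcal{P}^{\Gamma}$ is the solution of an obstacle problem, and the characterizing inequality $\langle u^{-},w\rangle_{g,a,A}\leq0$ for all $w\geq0$ simply fails (already in one dimension: take $u(x)=x$ on $(-1,1)$ and $w(x)=1+x$; then $\langle u^{-},w\rangle_{H^{1}}=5/6>0$). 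Your proposed integration-by-parts on $\{u<0\}$ produces a boundary term with the wrong sign and a bulk term with no sign at all, so it cannot close.

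The paper sidesteps this completely by never asking for control of $\|u^{-}\|_{g,a,A}$. For the linear piece, one picks the \emph{actual} nearest point $v\in\mathcal{P}^{\Gamma}$ to $u$ (not $u^{+}$); since $Lv\in\mathcal{P}^{\Gamma}$ by the maximum principle and $L$ is linear, Lemma~\ref{lem:vetois} gives directly
\[
\mathrm{dist}_{A}(Lu,\mathcal{P}^{\Gamma})\leq\|L(u-v)\|_{g,a,A}\leq\bar{\mu}\,\|u-v\|_{g,a,A}=\bar{\mu}\,\mathrm{dist}_{A}(u,\mathcal{P}^{\Gamma}).
\]
For the nonlinear piece, only the $L^{2^{\ast}}$-norm of $u^{-}$ is needed, and \emph{that} one does satisfy $|u^{-}|_{g,c,2^{\ast}}\leq C\,\mathrm{dist}_{A}(u,\mathcal{P}^{\Gamma})$ by the pointwise inequality you already used in (a). The trick is to bound $\mathrm{dist}_{A}(Gu,\mathcal{P}^{\Gamma})\leq\|G(u)^{-}\|_{g,a,A}$, use the orthogonality $\langle G(u)^{+},G(u)^{-}\rangle_{g,a,A}=0$ to write $\|G(u)^{-}\|_{g,a,A}^{2}=\langle Gu,G(u)^{-}\rangle_{g,a,A}=\int_{M}c|u|^{2^{\ast}-2}u\,G(u)^{-}$, and then apply H\"older together with $|u^{-}|_{g,c,2^{\ast}}\leq C\,\mathrm{dist}_{A}(u,\mathcal{P}^{\Gamma})$. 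This yields
\[
\mathrm{dist}_{A}(Lu+Gu,\mathcal{P}^{\Gamma})\leq\bigl(\bar{\mu}+C^{2^{\ast}}\rho^{2^{\ast}-2}\bigr)\,\mathrm{dist}_{A}(u,\mathcal{P}^{\Gamma})
\]
for $u\in B_{\rho}(\mathcal{P}^{\Gamma})$, and strict positive invariance then follows from the convexity of $B_{\rho}(\mathcal{P}^{\Gamma})$ via a standard flow-invariance criterion (Deimling). Your Duhamel formula would also work once the contraction is expressed in $\mathrm{dist}_{A}$, but as written it is chained to the wrong quantity.
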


\begin{proof}
By symmetry considerations, it is enough to prove this for $B_{\rho
}(\mathcal{P}^{\Gamma}).$

(a): \ Note that $\left\vert u^{-}(p)\right\vert \leq\left\vert
u(p)-v(p)\right\vert $ for every $u,v:M\rightarrow\mathbb{R}$ with $v\geq0,$
$p\in M.$ Sobolev's inequality yields a positive constant $C$ such that%
\begin{equation}
\left\vert u^{-}\right\vert _{g,c,2^{\ast}}=\min_{v\in\mathcal{P}^{\Gamma}%
}\left\vert u-v\right\vert _{g,c,2^{\ast}}\leq C\min_{v\in\mathcal{P}^{\Gamma
}}\left\Vert u-v\right\Vert _{g,a,A}=C\,\text{dist}_{A}(u,\mathcal{P}^{\Gamma
}) \label{eq:distP}%
\end{equation}
for every $u\in H_{g}^{1}(M)^{\Gamma}.$ If $u\in\mathcal{E}_{g}^{\Gamma}$,
then $u^{-}\in\mathcal{N}_{g}^{\Gamma}$ and, therefore, $\left\vert
u^{-}\right\vert _{g,c,2^{\ast}}^{2^{\ast}}=mJ_{g}(u^{-})\geq m\tau
_{g}^{\Gamma}>0.$ This proves that dist$_{A}(u,\mathcal{P}^{\Gamma})\geq
\rho_{1}>0$ for all $u\in\mathcal{E}_{g}^{\Gamma}$.

(b): \ By the maximum principle, $Lv\in\mathcal{P}^{\Gamma}$ and
$Gv\in\mathcal{P}^{\Gamma}$ if $v\in\mathcal{P}^{\Gamma}.$ For $u\in H_{g}%
^{1}(M)^{\Gamma}$ let $v\in\mathcal{P}^{\Gamma}$ be such that dist$_{A}%
(u,\mathcal{P}^{\Gamma})=\left\Vert u-v\right\Vert _{g,a,A}.$ Then, Lemma
\ref{lem:vetois}\ yields%
\begin{equation}
\text{dist}_{A}(Lu,\mathcal{P}^{\Gamma})\leq\left\Vert Lu-Lv\right\Vert
_{g,a,A}\leq\overline{\mu}\left\Vert u-v\right\Vert _{g,a,A}=\overline{\mu
}\,\text{dist}_{A}(u,\mathcal{P}^{\Gamma}). \label{eq:distLu}%
\end{equation}
On the other hand, from (\ref{eq:Gu}), H\"{o}lder's inequality and
(\ref{eq:distP}) we get that%
\begin{align*}
&  \text{dist}_{A}(Gu,\mathcal{P}^{\Gamma})\left\Vert G(u)^{-}\right\Vert
_{g,a,A}\leq\left\Vert G(u)^{-}\right\Vert _{g,a,A}^{2}=\left\langle
G(u),G(u)^{-}\right\rangle _{g,a,A}\\
&  \text{\qquad}=\int_{M}c\left\vert u\right\vert ^{2^{\ast}-2}uG(u)^{-}%
\,dV_{g}\leq\int_{M}c\left\vert u^{-}\right\vert ^{2^{\ast}-2}u^{-}%
G(u)^{-}\,dV_{g}\\
&  \text{\qquad}\leq\left\vert u^{-}\right\vert _{g,c,2^{\ast}}^{2^{\ast}%
-1}\left\vert G(u)^{-}\right\vert _{g,c,2^{\ast}}\leq C^{2^{\ast}%
}\,\text{dist}_{A}(u,\mathcal{P}^{\Gamma})^{2^{\ast}-1}\left\Vert
G(u)^{-}\right\Vert _{g,a,A}.
\end{align*}
Hence,%
\begin{equation}
\text{dist}_{A}(Gu,\mathcal{P}^{\Gamma})\leq C^{2^{\ast}}\,\text{dist}%
_{A}(u,\mathcal{P}^{\Gamma})^{2^{\ast}-1}\text{\qquad}\forall u\in H_{g}%
^{1}(M)^{\Gamma}. \label{eq:distGu}%
\end{equation}
Fix $\nu\in\left(  \overline{\mu},1\right)  $ and let $\rho_{2}>0$ be such
that $C^{2^{\ast}}\rho_{2}^{2^{\ast}-2}\leq\nu-\overline{\mu}$. Then, for
$\rho\in(0,\rho_{2}),$ from inequalities (\ref{eq:distLu}) and
(\ref{eq:distGu}) we obtain%
\[
\text{dist}_{A}(Lu+Gu,\mathcal{P}^{\Gamma})\leq\nu\,\text{dist}_{A}%
(u,\mathcal{P}^{\Gamma})\text{\qquad}\forall u\in B_{\rho}(\mathcal{P}%
^{\Gamma}),\text{ }%
\]
Therefore, $Lu+Gu\in\,$int$B_{\rho}(\mathcal{P}^{\Gamma})$ if $u\in B_{\rho
}(\mathcal{P}^{\Gamma}).$ Since $B_{\rho}(\mathcal{P}^{\Gamma})$ is closed and
convex, Theorem 5.2 in \cite{d}\ yields that%
\[
\psi(t,u)\in B_{\rho}(\mathcal{P}^{\Gamma})\text{ \ for all \ }t\in
(0,T(u))\text{ \ if }u\in B_{\rho}(\mathcal{P}^{\Gamma}).
\]
Now we can argue as in the proof of Lemma 2 in \cite{cw}\ to show that, in
fact, $B_{\rho}(\mathcal{P}^{\Gamma})$ is strictly positively invariant. Letting $\rho_0:=\min\{\rho_1,\rho_2\}$,
we get the result.
\end{proof}

We fix $\rho\in(0,\rho_{0})$ and, for $d\in\mathbb{R}$, we set%
\[
\mathcal{D}_{d}^{\Gamma}:=B_{\rho}(\mathcal{P}^{\Gamma})\cup B_{\rho
}(\mathcal{P}^{\Gamma})\cup J_{g}^{d},
\]
where $J_{g}^{d}:=\{u\in H_{g}^{1}(M)^{\Gamma}:J_{g}(u)\leq d\}.$ It follows
from Lemma \ref{lem:variational_principle} that $\mathcal{D}_{0}^{\Gamma}$ is
strictly positively invariant under the flow $\psi$, and that a critical point
of $J_{g}$ is sign-changing iff it lies in the complement of $\mathcal{D}%
_{0}^{\Gamma}$.

To find critical points of $J_{g}$ in the complement of $\mathcal{D}%
_{0}^{\Gamma}$ we use the relative genus. A subset $\mathcal{Y}$ of $H_{g}%
^{1}(M)^{\Gamma}$ will be called symmetric if $-u\in\mathcal{Y}$ for every
$u\in\mathcal{Y}$.

\begin{definition}
Let $\mathcal{D}$ and $\mathcal{Y}$ be symmetric subsets of $H_{g}%
^{1}(M)^{\Gamma}$. The genus of $\mathcal{Y}$ relative to $\mathcal{D}$,
denoted by $\mathfrak{g}(\mathcal{Y},\mathcal{D})$, is the smallest number $n$
such that $\mathcal{Y}$ can be covered by $n+1$ open symmetric subsets
$\;\mathcal{U}_{0},\mathcal{U}_{1},\ldots,\mathcal{U}_{n}$ of $H_{g}%
^{1}(M)^{\Gamma}$ with the following two properties:

\begin{itemize}
\item[(i)] $\mathcal{Y}\cap\mathcal{D}\subset\mathcal{U}_{0}$ and there exists
an odd continuous map $\vartheta_{0}:\mathcal{U}_{0}\rightarrow\mathcal{D}$
such that $\vartheta_{0}(u)=u$ for $u\in\mathcal{Y}\cap\mathcal{D}$.

\item[(ii)] there exist odd continuous maps $\vartheta_{j}:\mathcal{U}%
_{j}\rightarrow\{1,-1\}$ for every $j=1,\ldots,n$.
\end{itemize}

If no such cover exists, we define $\mathfrak{g}(\mathcal{Y},\mathcal{D}%
):=\infty$.
\end{definition}

Now define
\[
c_{j}:=\inf\{c\in\mathbb{R}:\mathfrak{g}(\mathcal{D}_{c}^{\Gamma}%
,\mathcal{D}_{0}^{\Gamma})\geq j\}.
\]

\begin{lemma}
\label{Lemma:Var pri crit points}Assume that $J_{g}$ satisfies condition
$(PS)_{c_{j}}^{\Gamma}$ in $H_{g}^{1}(M).$ Then, the following statements hold true:

\begin{itemize}
\item[(a)] $J_{g}$ has a sign-changing critical point $u\in H_{g}%
^{1}(M)^{\Gamma}$ with $J_{g}(u)=c_{j}$.

\item[(b)] If $c_{j}=c_{j+1}$, then $J_{g}$ has infinitely many sign-changing
critical points $u\in H_{g}^{1}(M)^{\Gamma}$ with $J_{g}(u)=c_{j}$.
\end{itemize}

Consequently, if $J_{g}$ satisfies $(PS)_{c}^{\Gamma}$ in $H_{g}^{1}(M)$ for
every $c\leq d$, then $J_{g}$ has at least $\mathfrak{g}(\mathcal{D}%
_{d}^{\Gamma},\mathcal{D}_{0}^{\Gamma})$ pairs of sign-changing critical
points $u$ in $H_{g}^{1}(M)^{\Gamma}$ with $J_{g}(u)\leq d$.
\end{lemma}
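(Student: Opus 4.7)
The plan is to use a standard equivariant minimax argument based on the relative genus, together with a deformation lemma that exploits the strict positive invariance of $\mathcal{D}_0^{\Gamma}$ established in Lemma \ref{lem:variational_principle}. The map $u\mapsto -u$ is an odd isometry commuting with $\nabla J_g$, so the entire setup is symmetric.

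First I would record the three properties of the relative genus that drive the argument: monotonicity ($\mathcal{Y}_1\subset\mathcal{Y}_2\Rightarrow\mathfrak{g}(\mathcal{Y}_1,\mathcal{D}_0^{\Gamma})\leq\mathfrak{g}(\mathcal{Y}_2,\mathcal{D}_0^{\Gamma})$), subadditivity under odd continuous maps ($\mathfrak{g}(\mathcal{Y},\mathcal{D}_0^{\Gamma})\leq\mathfrak{g}(\eta(\mathcal{Y}),\mathcal{D}_0^{\Gamma})$ if $\eta$ is odd and continuous with $\eta(\mathcal{D}_0^{\Gamma})\subset\mathcal{D}_0^{\Gamma}$), and continuity on compacta (if $K$ is a compact symmetric set disjoint from $\mathcal{D}_0^{\Gamma}$, then $K$ admits a closed symmetric neighborhood $N$, disjoint from $\mathcal{D}_0^{\Gamma}$, such that $\mathfrak{g}(N,\mathcal{D}_0^{\Gamma})=\mathfrak{g}(K,\mathcal{D}_0^{\Gamma})$). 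These are standard and follow from the definition.

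Second, I would prove a deformation lemma: if $(PS)_c^{\Gamma}$ holds and $K$ is a closed symmetric neighborhood of the set of sign-changing critical points at level $c$ (possibly $K=\emptyset$), then for some $\varepsilon>0$ the negative gradient flow $\psi$ produces an odd continuous map $\eta\colon \mathcal{D}_{c+\varepsilon}^{\Gamma}\setminus K\to \mathcal{D}_{c-\varepsilon}^{\Gamma}$ which is the identity on $\mathcal{D}_0^{\Gamma}$. The two ingredients are: (i) by Lemma \ref{lem:variational_principle}(b), $\mathcal{D}_0^{\Gamma}=B_{\rho}(\mathcal{P}^{\Gamma})\cup B_{\rho}(-\mathcal{P}^{\Gamma})\cup J_g^0$ is strictly positively invariant, so the flow from any point of $\mathcal{D}_{c+\varepsilon}^{\Gamma}\cap \mathcal{D}_0^{\Gamma}$ stays in $\mathcal{D}_0^{\Gamma}$; (ii) on the complement, $(PS)_c^{\Gamma}$ together with standard Palais-Smale arguments gives a uniform lower bound $\|\nabla J_g\|\geq\delta>0$ off a neighborhood of the critical set, so following the flow for a fixed time $T$ drops $J_g$ below $c-\varepsilon$. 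The oddness of $\eta$ follows from the oddness of $\nabla J_g$.

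Third, I derive (a), (b), and the consequence. For (a), if no sign-changing critical point has value $c_j$, apply the deformation lemma with $K=\emptyset$ to obtain an odd map $\eta\colon\mathcal{D}_{c_j+\varepsilon}^{\Gamma}\to\mathcal{D}_{c_j-\varepsilon}^{\Gamma}$ fixing $\mathcal{D}_0^{\Gamma}$; this gives $\mathfrak{g}(\mathcal{D}_{c_j+\varepsilon}^{\Gamma},\mathcal{D}_0^{\Gamma})\leq\mathfrak{g}(\mathcal{D}_{c_j-\varepsilon}^{\Gamma},\mathcal{D}_0^{\Gamma})\leq j-1$, contradicting the definition of $c_j$. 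For (b), if $c_j=c_{j+1}=:c$ and only finitely many pairs of sign-changing critical points lay at level $c$, the critical set $K_c$ of such points would be a compact symmetric set with finite genus $m$; by continuity, a small symmetric closed neighborhood $N$ of $K_c$ satisfies $\mathfrak{g}(N,\mathcal{D}_0^{\Gamma})=m$, and applying the deformation lemma off $N$ combined with subadditivity yields $\mathfrak{g}(\mathcal{D}_{c+\varepsilon}^{\Gamma},\mathcal{D}_0^{\Gamma})\leq \mathfrak{g}(\mathcal{D}_{c-\varepsilon}^{\Gamma},\mathcal{D}_0^{\Gamma})+m\leq j-1+m$, which contradicts $c=c_{j+1}$ if we initially supposed $m=0$ and gives infinitely many critical points otherwise by letting $m$ grow. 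The final consequence follows by summing: $c_1\leq\cdots\leq c_k\leq d$ whenever $\mathfrak{g}(\mathcal{D}_d^{\Gamma},\mathcal{D}_0^{\Gamma})\geq k$, and (a)-(b) together produce $k$ pairs of sign-changing critical points at or below $d$.

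The main obstacle is the deformation lemma, because the flow must be controlled simultaneously in two regimes: inside $\mathcal{D}_0^{\Gamma}$, where we rely on strict positive invariance to make sure trajectories never leave (so no spurious contribution to the genus appears), and outside $\mathcal{D}_0^{\Gamma}$, where we need a uniform gradient bound on sublevel strips depending on $(PS)_c^{\Gamma}$ in order to push energies down by a fixed amount in finite time. Piecing these together while keeping the map odd and continuous, with the correct behavior on $\mathcal{D}_0^{\Gamma}$, is the technically delicate step.
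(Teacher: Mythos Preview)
Your proposal is correct and follows essentially the same approach as the paper, which simply cites Proposition~3.6 in \cite{cp} and names the two key ingredients you use: the strict positive invariance of $\mathcal{D}_0^{\Gamma}$ under the negative gradient flow (Lemma~\ref{lem:variational_principle}) and the monotonicity and subadditivity properties of the relative genus. One small imprecision: the flow does not fix $\mathcal{D}_0^{\Gamma}$ pointwise but only maps it into itself, so in the deformation step you should either cut off the time parameter below the level $c-\varepsilon$ (making $\eta$ the identity on $J_g^{c-2\varepsilon}$) or invoke the map-monotonicity of the relative genus in the form that only requires $\eta(\mathcal{D}_0^{\Gamma})\subset\mathcal{D}_0^{\Gamma}$.
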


\begin{proof}
The proof is exactly the same as that of Proposition 3.6 in \cite{cp}. It uses
the fact that $\mathcal{D}_{0}^{\Gamma}$ is strictly positively invariant
under the flow $\psi,$ and the monotonicity and subadditivity properties of
the relative genus.
\end{proof}

Now we can follow the proof of Theorem 3.7 in \cite{cp} to obtain Theorem
\ref{Theorem : Compactness}. We give the details for the sake of completeness.

\begin{proof}
[Proof of Theorem \ref{Theorem : Compactness}]Let $d:=\sup_{W}J_{g}$. By Lemma
\ref{Lemma:Var pri crit points}, we only need to show that $n:=\mathfrak{g}%
\left(  \mathcal{D}_{d}^{\Gamma},\mathcal{D}_{0}^{\Gamma}\right)  \geq
\dim(W)-1$. Let $\mathcal{U}_{0},\mathcal{U}_{1},\ldots,\mathcal{U}_{n}$ be
open symmetric subsets of $H_{g}^{1}(M)^{\Gamma}$ covering $\mathcal{D}%
_{d}^{\Gamma}$ with $\mathcal{D}_{0}^{\Gamma}\subset\mathcal{U}_{0}$ and let
$\vartheta_{0}:\mathcal{U}_{0}\rightarrow\mathcal{D}_{0}^{\Gamma}$ and
$\vartheta_{j}:\mathcal{U}_{j}\rightarrow\{1,-1\}$, $j=1,\ldots,n$, be odd
continuous maps such that $\vartheta_{0}(u)=u$ for all $u\in\mathcal{D}%
_{0}^{\Gamma}$. Since $H_{g}^{1}(M)^{\Gamma}$ is an AR we may assume that
$\vartheta_{0}$ is the restriction of an odd continuous map $\widetilde
{\vartheta}_{0}:H_{g}^{1}(M)^{\Gamma}\rightarrow H_{g}^{1}(M)^{\Gamma}$. Let
$\mathcal{B}$ be the connected component of the complement of the Nehari
manifold $\mathcal{N}_{g}^{\Gamma}$ in $H_{g}^{1}(M)^{\Gamma}$ which contains
the origin, and set $\mathcal{O}:=\{u\in W:\widetilde{\vartheta}_{0}%
(u)\in\mathcal{B}\}$. Then, $\mathcal{O}$ is a bounded open symmetric
neighborhood of $0$ in $W$.

Let $\mathcal{V}_{j}:=\mathcal{U}_{j}\cap\partial\mathcal{O}$. Then,
$\mathcal{V}_{0},\mathcal{V}_{1},\ldots,\mathcal{V}_{n}$ are symmetric and
open in $\partial\mathcal{O}$, and they cover $\partial\mathcal{O}$. Further,
by Lemma \ref{lem:variational_principle},
\[
\vartheta_{0}(\mathcal{V}_{0})\subset\mathcal{D}_{0}^{\Gamma}\cap
\mathcal{N}_{g}^{\Gamma}\subset\mathcal{N}_{g}^{\Gamma}\smallsetminus
\mathcal{E}_{g}^{\Gamma}.
\]
The set $\mathcal{N}_{g}^{\Gamma}\smallsetminus\mathcal{E}_{g}^{\Gamma}$
consists of two connected components; see, e.g., \cite{ccn}. Therefore, there
exists an odd continuous map $\eta:\mathcal{N}_{g}^{\Gamma}\smallsetminus
\mathcal{E}_{g}^{\Gamma}\rightarrow\{1,-1\}$. Let $\eta_{j}:\mathcal{V}%
_{j}\rightarrow\{1,-1\}$ be the restriction of the map $\eta\circ\vartheta
_{0}$ if $j=0,$ and the restriction of $\vartheta_{j}$ if $j=1,\ldots,n$. Take
a partition of the unity $\{\pi_{j}:\partial\mathcal{O}\rightarrow
\lbrack0,1]:j=0,1,\ldots,n\}$ subordinated to the cover $\{\mathcal{V}%
_{0},\mathcal{V}_{1},\ldots,\mathcal{V}_{n}\}$ consisting of even functions,
and let $\{e_{1},\ldots,e_{n+1}\}$ be the canonical basis of $\mathbb{R}%
^{n+1}$. Then, the map $\Psi:\partial\mathcal{O}\rightarrow\mathbb{R}^{n+1}$
given by
\[
\Psi(u):=\sum_{j=0}^{n}\eta_{j}(u)\pi_{j}(u)e_{j+1}%
\]
is odd and continuous, and satisfies $\Psi(u)\neq0$ for every $u\in
\partial\mathcal{O}$. The Borsuk-Ulam theorem allow us to conclude that
$\dim(W)\leq n+1,$ as claimed.
\end{proof}

\bigskip

\begin{acknowledgement}
We are grateful to J\'{e}r\^{o}me V\'{e}tois for suggesting the proof of Lemma
\ref{lem:vetois}. We wish also to thank the anonymous referee for his/her
careful reading and valuable comments.
\end{acknowledgement}

\end{document}